\documentclass{amsart}
\usepackage{amsmath, amsthm, amscd, amsfonts, amssymb, graphicx}
\usepackage[bookmarksnumbered, plainpages]{hyperref}

\usepackage{mathrsfs}
\usepackage{tikz-cd}

\setcounter{page}{1}

%------------------------------------------------------------------------------------%

\newtheorem{thm}{Theorem}[section]
\newtheorem{cor}[thm]{Corollary}

\newtheorem{prop}[thm]{Proposition}
\newtheorem{defn}[thm]{Definition}

\numberwithin{equation}{section}

\def\tr{\mbox{tr}}

\def\dim{\mbox{dim}}
\def\Vol{\mbox{Vol}}
\def\Area{\mbox{Area}}
%------------------------------------------------------------------------------------%

\begin{document}

\title{\bf Remarks on Weakly $p$-K\"ahler hyperbolic manifolds}
\author{Changpeng Pan}

\address{Changpeng Pan\\School of Mathematics and Statistics\\
Nanjing University of Science and Technology\\
Nanjing, 210094,P.R. China\\ }
\email{mathpcp@njust.edu.cn}

%\subjclass[2020]{53C07, 57N16}
\keywords{weakly $p$-K\"ahler hyperbolic, Euler characteristic, holomorphic map}

\maketitle

\begin{abstract}
This paper investigates the generalizations and applications of weakly $p$-K\"ahler hyperbolic manifolds. 
\end{abstract}

\vskip 0.2 true cm

%------------------------------------------------------------------------------------%

\pagestyle{myheadings}
\markboth{\rightline {\scriptsize C. Pan}}
         {\leftline{\scriptsize Remarks on weakly $p$-K\"ahler hyperbolic manifolds}}

\bigskip
\bigskip

%------------------------------------------------------------------------------------%
%------------------------------------------------------------------------------------%

\section{ Introduction}
As is well known, the Hopf conjecture states that a $2n$-dimensional compact Riemannian manifold with negative sectional curvature should have positive sign Euler characteristic number, i.e. $(-1)^{n}\chi(X)>0$. For the case of $n=1,2$, it can be derived from the Chern-Gauss-Bonnet formula. For general dimensions, Gromov \cite{G} introduced the concept of K\"ahler hyperbolic manifolds, and prove that it has positive sign Euler characteristic number. Specifically, a K\"ahler manifold with negative sectional curvature is a K\"ahler hyperbolic manifold, so he proved the Hopf conjecture for K\"ahler manifold. He also proved that K\"ahler hyperbolicity implies Kobayashi hyperbolicity and the canonical bundle of K\"ahler hyperbolic manifold is quasi-ample.

Since then, people have recognized the importance of K\"ahler hyperbolicity and numerous works on the generalizations and applications of K\"ahler hyperbolicity have been published. Here, we only cite a portion of them, such as \cite{BDET,CX,CY,FWW,HM,H,JZ,L,MP}.

Recently, Bei-Diverio-Eyssidieux-Trapani \cite{BDET}  introduced the concept of weakly K\"ahler hyperbolic manifolds in their paper and explored their relationship with Kobayashi hyperbolicity and the distribution of entire curves. The fundamental property of weakly K\"ahler hyperbolicity is its birational invariance. Through appropriate suitable modifications, they demonstrated that there exists a spectral gap for the Laplacian on weakly K\"ahler hyperbolic manifolds, thereby proving the conjectures of Lang and Green–Griffiths. 

A K\"ahler manifold is called Kobayashi hyperbolic if there is no non-constant holomorphic map $f:\mathbb{C}\rightarrow X$.  Gromov proved that K\"ahler hyperbolicity implies Kobayashi hyperbolicity. Chen-Yang \cite{CY} generalized this result to symplectic manifolds that are homotopy equivalent to Riemannian manifolds with negative sectional curvature. Inspired by their work, Li \cite{L} showed that there are no rational curves on K\"ahler exact manifold, i.e., no non-constant holomorphic map $f:\mathbb{CP}^{1}\rightarrow X$, which implies that the canonical bundle of a K\"ahler exact manifold of general type is ample. 

Marouani-Popovici \cite{MP} and Haggui-Marouani \cite{HM} considered the existence of non-constant holomorphic maps $f:\mathbb{C}^{p}\rightarrow X$, thereby introducing the concepts of balanced hyperbolicity and $p$-K\"ahler hyperbolicity. They studied the properties of these concepts.

Inspired by the above work, this paper aims to investigate the distribution of non-constant rational curves on weakly $p$-K\"ahler hyperbolic (exact) manifolds.

Let $(X,\omega)$ be a compact K\"ahler manifold.  Let $\pi:\widetilde{X}\rightarrow X$ be the K\"ahler universal covering map. A closed $p$-form $\alpha$ on $X$ is called $\widetilde{d}(\text{bounded})$ if the pull back $p$-form $\widetilde{\alpha}$ on $\widetilde{X}$ is of the form $\widetilde{d}\beta$, and $\|\beta\|_{L^{\infty}(\widetilde{X},\widetilde{\omega})}<+\infty$. 
 Cohomology calss $[\mu]\in H^{1,1}(X,\mathbb{R})$ is called big and nef if $[\mu]\geq 0$ and $\int_{X}[\mu]^n>0$. 
\begin{defn}\cite{BDET}
$(X,\omega)$ is called weakly K\"ahler hyperbolic if there exists a closed real form $\mu$ whose cohomology class $[\mu]\in H^{1,1}(X,\mathbb{R})$ is big and nef, such that it is $\widetilde{d}(\text{bounded})$. In particular, if $[\mu]$ is K\"ahler class it is called K\"ahler hyperbolic. 
\end{defn}

\medskip

In \cite{BDET}, the authors proved that weakly K\"ahler hyperbolic manifolds possess some interesting analytic and topological properties, For example, they showed that the universal cover of a weakly K\"ahler hyperbolic manifold satisfies the linear isoperimetric inequality, its subvarieties outside the degeneracy set have infinite fundamental groups, and the image of all entire curves must lie within the degeneracy set, among others. The presented paper aims to generalize this result to weakly $p$-K\"ahler manifolds.

In \cite{H}, Huang considered holomorphic bundles on K\"ahler hyperbolic manifolds, extending the results of Gromov \cite{G} and Li \cite{L} to certain holomorphic bundles. We also consider holomorphic bundles on weakly K\"ahler hyperbolic manifolds.

\medskip

This paper is organized as follows. In Section \ref{sec:Pre}, we give the difinition and examples of weakly $p$-K\"ahler hyperbolic manifolds. In Section \ref{sec:weakly}, we prove some analytic and topological properties of weakly $p$-K\"ahler hyperbolic manifolds. In Section \ref{sec:vb}, we prove that for holomorphic vector bundles $E$ over weakly K\"ahler hyperbolic manifolds $X$ satisfies certain condition then $\chi^{n}(X,E)>0$.

\section{Basic definitions and properties}\label{sec:Pre}
Motivated by \cite{DP}, we give the following definition.
\begin{defn}
Let $X$ be a compact K\"ahler manifold. A $(p,p)$-cohomology $[\alpha]\in H^{p,p}(X,\mathbb{R})$ is called nef if there exists a K\"ahler metric $\omega$ for any 
$m$-dimensional irreducible analytic subvarieties $V\subset X$ it holds that $\int_{V}\alpha\wedge\omega^{m-p}\geq 0$, where $m=p,\cdots,n$. Its null locus is defined by
\begin{equation*}
Null([\alpha])=\cup_{\int_{Z}\alpha\wedge\omega^{m-p}=0}Z
\end{equation*}
where $Z$ varies among all proper analytic subvarieties of $X$ with $m\geq p$. We call a nef $(p,p)$-cohomology is strictly nef if $\int_{X}\alpha\wedge\omega^{n-p}>0$.
\end{defn}
For example, if $[\mu]\in H^{1,1}(X,\mathbb{R})$ is nef, then $[\mu^{p}]$ as a $(p,p)$-cohomology is nef in also nef.   
When $p=1$, the definition of nefness above is equivalent to the usual definition (see \cite{DP}), but the definition of null locus differs. We are not sure whether the definition of null locus for general $p$ is appropriate, but it is sufficient for our applications. Another interesting question is whether, for general $p$, the null locus is an analytic subvariety. Very recently, Fu-Wang-Wu \cite{FWW} published an article that also provided a definition of weak 
$k$-K\"ahler hyperbolicity and discussed their birational invariance. Now, we provide a sufficient condition for a form to be nef.
\begin{prop}\label{p:1}
If there exists a K\"ahler form such that for any $\epsilon>0$ we have a $(p,p)$-form $\alpha_{\epsilon}\in[\alpha]$ such that $\alpha_{\epsilon}+\epsilon\omega^{p}>0$ then $[\alpha]$ is nef.
\end{prop}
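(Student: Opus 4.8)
The plan is to compare the nefness integrals against the approximating forms $\alpha_\epsilon$ and then let $\epsilon\to 0$. Fix an $m$-dimensional irreducible analytic subvariety $V\subset X$ with $p\le m\le n$, and let $\omega$ be the K\"ahler form supplied by the hypothesis. Since $\alpha_\epsilon$ and $\alpha$ represent the same class, their difference is exact, say $\alpha_\epsilon-\alpha=d\gamma$; because $\omega$ is closed, $(\alpha_\epsilon-\alpha)\wedge\omega^{m-p}=d(\gamma\wedge\omega^{m-p})$. Integrating over $V$ and using that the current of integration $[V]$ is $d$-closed, I obtain $\langle[V],d(\gamma\wedge\omega^{m-p})\rangle=0$, hence
\begin{equation*}
\int_{V}\alpha\wedge\omega^{m-p}=\int_{V}\alpha_\epsilon\wedge\omega^{m-p}.
\end{equation*}

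Next I would exploit positivity. Restricting the hypothesis $\alpha_\epsilon+\epsilon\omega^{p}>0$ to $V$ via the inclusion $\iota\colon V\hookrightarrow X$ keeps it a (weakly) positive $(p,p)$-form on $V$, while $\omega^{m-p}|_{V}$ is a strongly positive $(m-p,m-p)$-form, being a power of a K\"ahler form. The wedge of a weakly positive form with a strongly positive form of complementary bidegree is a non-negative multiple of the volume element, so $(\alpha_\epsilon+\epsilon\omega^{p})\wedge\omega^{m-p}\ge 0$ on the regular locus of $V$; since the singular locus is negligible and $V$ has finite volume, integrating gives
\begin{equation*}
\int_{V}(\alpha_\epsilon+\epsilon\omega^{p})\wedge\omega^{m-p}\ge 0.
\end{equation*}
Combining this with the identity above yields $\int_{V}\alpha\wedge\omega^{m-p}\ge-\epsilon\int_{V}\omega^{m}$.

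Finally, since $\int_{V}\omega^{m}$ is a fixed finite positive number (the normalized volume of $V$) and $\epsilon>0$ is arbitrary, letting $\epsilon\to 0$ gives $\int_{V}\alpha\wedge\omega^{m-p}\ge 0$. As $V$ and $m$ were arbitrary, $[\alpha]$ is nef with respect to $\omega$.

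I expect the only genuine subtlety to be the first step, namely justifying that integrals over a possibly singular $V$ obey Stokes' theorem, i.e. that $\int_{V}d\eta=0$. This rests on the fact that the integration current over an analytic subvariety is $d$-closed, equivalently that no boundary term arises across the singular set (which has real codimension at least two). The positivity step is then standard linear algebra of $(p,p)$-forms, provided one fixes the convention that $>0$ denotes weak positivity, so that pairing against the strongly positive form $\omega^{m-p}$ is legitimate and produces a non-negative measure.
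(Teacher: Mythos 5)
Your proposal is correct and follows essentially the same route as the paper: the paper's two-line proof also integrates $\alpha_{\epsilon}+\epsilon\omega^{p}$ against $\omega^{m-p}$ over $V$ and lets $\epsilon\to 0$, implicitly using exactly the two facts you spell out (cohomological invariance of integration over the $d$-closed current $[V]$, and positivity of the pairing of a weakly positive $(p,p)$-form with the strongly positive form $\omega^{m-p}$). Your version merely makes these implicit steps explicit, which is a sound elaboration rather than a different argument.
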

\begin{proof}
For any 
$m$-dimensional irreducible analytic subvarieties $V\subset X$, we have
\begin{equation*}
\int_{V}(\alpha_{\epsilon}+\epsilon\omega^{p})\wedge\omega^{m-p}>0.
\end{equation*}
Let $\epsilon\to 0$, then we get $\int_{V}\alpha\wedge\omega^{m-p}\geq 0$.
\end{proof}

\begin{defn}
A K\"ahler manifold $(X,\omega)$ is called weakly $p$-K\"ahler hyperbolic if there exists a strictly nef $(p,p)$-cohomology $[\alpha]\in H^{p,p}(X,\mathbb{R})$ such that it is $\widetilde{d}(\text{bounded})$. It is called weakly $p$-K\"ahler exact if $\tilde{\alpha}$ is exact. 
\end{defn}

\begin{defn}
Let $(X,\omega)$ be a K\"ahler exact manifold. Let $\mathcal{W}_{X}$ be the positive convex cone of all K\"ahler exact strictly nef $(p,p)$-cohomology classes $[\alpha]\in H^{p,p}(X,\mathbb{R})$.  Define the degeneracy set $Z_{X}$ by
\begin{equation*}
Z_{X}=\cap_{[\alpha]\in\mathcal{W}_{X}}Null([\alpha]).
\end{equation*}
\end{defn}

\begin{prop}
If $(X,\omega)$ is weakly $p$-K\"ahler hyperbolic, then it is weakly $q$-K\"ahler hyperbolic for $\forall p<q$.
\end{prop}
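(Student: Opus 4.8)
The plan is to produce an explicit strictly nef, $\widetilde{d}(\text{bounded})$ representative in bidegree $(q,q)$ by wedging the given $(p,p)$-datum with a power of the K\"ahler form. Concretely, let $[\alpha]\in H^{p,p}(X,\mathbb{R})$ be a strictly nef class with a closed $\widetilde{d}(\text{bounded})$ representative $\alpha$, so that on the universal cover $\widetilde{\alpha}=\widetilde{d}\beta$ with $\|\beta\|_{L^{\infty}(\widetilde{X},\widetilde{\omega})}<+\infty$. I would take the candidate class to be $[\alpha\wedge\omega^{q-p}]\in H^{q,q}(X,\mathbb{R})$ (which is well defined since $\alpha$ and $\omega$ are closed) and verify the three requirements of weak $q$-K\"ahler hyperbolicity in turn.

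For nefness, the key observation is the bookkeeping identity: for any irreducible $m$-dimensional analytic subvariety $V\subset X$ with $m\geq q$,
\begin{equation*}
\int_{V}(\alpha\wedge\omega^{q-p})\wedge\omega^{m-q}=\int_{V}\alpha\wedge\omega^{m-p}.
\end{equation*}
Since $m\geq q>p$, the right-hand side is $\geq 0$ by the nefness of $[\alpha]$, so $[\alpha\wedge\omega^{q-p}]$ is nef. Taking $V=X$ and $m=n$ gives $\int_{X}(\alpha\wedge\omega^{q-p})\wedge\omega^{n-q}=\int_{X}\alpha\wedge\omega^{n-p}>0$ by the strict nefness of $[\alpha]$, so the new class is strictly nef as well.

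For the $\widetilde{d}(\text{bounded})$ property, I would exploit that $\omega$ is closed, hence so is the pullback $\widetilde{\omega}$, to write
\begin{equation*}
\widetilde{\alpha}\wedge\widetilde{\omega}^{q-p}=\widetilde{d}\beta\wedge\widetilde{\omega}^{q-p}=\widetilde{d}\bigl(\beta\wedge\widetilde{\omega}^{q-p}\bigr),
\end{equation*}
which exhibits $\beta\wedge\widetilde{\omega}^{q-p}$ as a primitive. The point that needs care is that this primitive is bounded: since $\pi$ is a local isometry and $X$ is compact, the pointwise norm $|\widetilde{\omega}|_{\widetilde{\omega}}$ is constant, whence $\|\widetilde{\omega}^{q-p}\|_{L^{\infty}}<+\infty$, and the estimate $\|\beta\wedge\widetilde{\omega}^{q-p}\|_{L^{\infty}}\leq C\,\|\beta\|_{L^{\infty}}\,\|\widetilde{\omega}^{q-p}\|_{L^{\infty}}<+\infty$ follows from the elementary bound on the norm of a wedge product. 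Thus $[\alpha\wedge\omega^{q-p}]$ is $\widetilde{d}(\text{bounded})$, and it witnesses weak $q$-K\"ahler hyperbolicity.

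The only genuinely nonroutine step is this $L^{\infty}$ bound on the primitive; everything else is degree bookkeeping. I expect no real obstacle there, since the boundedness of $\widetilde{\omega}$ with respect to itself is an immediate consequence of the compactness of $X$ together with the local-isometry property of the covering $\pi$ — the same mechanism underlying the original $\widetilde{d}(\text{bounded})$ condition. Implicitly one assumes $p<q\leq n$, as the statement is vacuous for $q>n$.
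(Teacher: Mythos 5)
Your proposal is correct and follows essentially the same route as the paper: the paper also takes the class $[\alpha\wedge\omega^{q-p}]$, checks strict nefness directly from the definition, and exhibits the primitive $\beta\wedge\pi^{*}\omega^{q-p}$ on the universal cover. The only difference is that you spell out the degree bookkeeping and the $L^{\infty}$ bound on $\beta\wedge\widetilde{\omega}^{q-p}$, which the paper leaves as ``easy to verify''; your elaboration is accurate.
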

\begin{proof}
Let $[\alpha]\in H^{p,p}(X,\mathbb{R})$ be a strictly nef class which is $\widetilde{d}(\text{bounded})$. By definition, it is easy to verify that 
$\alpha\wedge\omega^{q-p}$ is nef and big. Suppose $\pi^{*}\alpha=\widetilde{d}\beta$, $\beta$ is bounded. Then $\pi^{*}(\alpha\wedge\omega^{q-p})=\widetilde{d}(\beta\wedge\pi^{*}\omega^{q-p})$, and $\beta\wedge\pi^{*}\omega^{q-p}$ is bounded.
\end{proof}

\begin{prop}
Let $X$ be a $p$-K\"ahler hyperbolic manifold. $N$ be any other K\"ahler manifold. Then $X\times N$ is weakly $p$-K\"ahler hyperbolic.
\end{prop}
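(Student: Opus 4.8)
The plan is to equip $X\times N$ with the product K\"ahler metric $\omega_{X\times N}=p_{1}^{*}\omega_{X}+p_{2}^{*}\omega_{N}$, where $p_{1}:X\times N\to X$ and $p_{2}:X\times N\to N$ are the projections and $\omega_{X},\omega_{N}$ are the given K\"ahler forms, and to take the pulled-back class $p_{1}^{*}[\alpha]\in H^{p,p}(X\times N,\mathbb{R})$ as the candidate. Here $[\alpha]$ is the $\widetilde{d}(\text{bounded})$ strictly nef class furnished by the hypothesis on $X$ (in the $p$-K\"ahler hyperbolic reading, represented by a positive $(p,p)$-form), with $\pi^{*}\alpha=\widetilde{d}\beta$ and $\beta$ bounded on $\widetilde{X}$. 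Three things have to be verified for $p_{1}^{*}[\alpha]$: that it is $\widetilde{d}(\text{bounded})$, that it is nef, and that it is strictly nef.

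I would dispose of the two easy points first. For $\widetilde{d}(\text{boundedness})$, use that the universal cover of a product is the product of universal covers, $\widetilde{X\times N}=\widetilde{X}\times\widetilde{N}$, with covering projection compatible with $p_{1}$ through the first-factor projection $q_{1}:\widetilde{X}\times\widetilde{N}\to\widetilde{X}$. Then the pull-back of $p_{1}^{*}\alpha$ to the cover is $q_{1}^{*}(\pi^{*}\alpha)=q_{1}^{*}\widetilde{d}\beta=\widetilde{d}(q_{1}^{*}\beta)$, and since the lifted metric splits orthogonally as $q_{1}^{*}\widetilde{\omega}_{X}+q_{2}^{*}\widetilde{\omega}_{N}$, the pointwise norm of $q_{1}^{*}\beta$ equals that of $\beta$ at the image point, so $\|q_{1}^{*}\beta\|_{L^{\infty}}=\|\beta\|_{L^{\infty}}<+\infty$. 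For the strictly-nef integral, writing $n=\dim X$ and $k=\dim N$, only the summand $(p_{1}^{*}\omega_{X})^{\,n-p}\wedge(p_{2}^{*}\omega_{N})^{\,k}$ survives when $\omega_{X\times N}^{(n+k)-p}$ is wedged with $p_{1}^{*}\alpha$ and integrated, so by Fubini
\begin{equation*}
\int_{X\times N}p_{1}^{*}\alpha\wedge\omega_{X\times N}^{(n+k)-p}=\binom{n+k-p}{n-p}\left(\int_{X}\alpha\wedge\omega_{X}^{\,n-p}\right)\left(\int_{N}\omega_{N}^{\,k}\right)>0,
\end{equation*}
both factors being positive since $[\alpha]$ is strictly nef on $X$ and $\omega_{N}$ is K\"ahler.

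The real work is the nefness of $p_{1}^{*}[\alpha]$, and here I would invoke Proposition \ref{p:1}: it is enough to produce, for every $\epsilon>0$, a form $\gamma_{\epsilon}\in p_{1}^{*}[\alpha]$ with $\gamma_{\epsilon}+\epsilon\,\omega_{X\times N}^{p}>0$, which then delivers the integral inequality over \emph{every} irreducible analytic subvariety $V\subset X\times N$, including the non-product ones that would otherwise be hard to control directly. Taking positive representatives $\alpha_{\epsilon}\in[\alpha]$ on $X$ with $\alpha_{\epsilon}+\epsilon\,\omega_{X}^{p}>0$, I set $\gamma_{\epsilon}=p_{1}^{*}\alpha_{\epsilon/2}$ and use the identity
\begin{equation*}
p_{1}^{*}\alpha_{\epsilon/2}+\epsilon\,\omega_{X\times N}^{p}=p_{1}^{*}\!\left(\alpha_{\epsilon/2}+\tfrac{\epsilon}{2}\omega_{X}^{p}\right)+\tfrac{\epsilon}{2}\,\omega_{X\times N}^{p}+\tfrac{\epsilon}{2}\left(\omega_{X\times N}^{p}-(p_{1}^{*}\omega_{X})^{p}\right),
\end{equation*}
in which the first summand is the pull-back of a strictly positive form, hence weakly positive; the third expands as a sum of products of pulled-back K\"ahler powers, hence weakly positive; and the middle term lies in the interior of the positive cone, so the total is strictly positive. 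This is the delicate step, and its content is precisely the availability of the pointwise positive approximants $\alpha_{\epsilon}$ on $X$: when $\alpha$ is represented by a genuine (transverse) positive $(p,p)$-form one may take $\alpha_{\epsilon}=\alpha$ and the argument closes immediately, whereas in the purely weakly nef setting one must first extract such approximants from strict nefness on $X$, and that is the point requiring the most care. Combining this nefness with the positive integral above yields that $p_{1}^{*}[\alpha]$ is strictly nef, completing the verification that $X\times N$ is weakly $p$-K\"ahler hyperbolic.
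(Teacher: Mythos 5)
Your proof is correct and follows essentially the same route as the paper's: the same candidate class $p_{1}^{*}[\alpha]$ on the product with the product K\"ahler metric, nefness via Proposition \ref{p:1}, and $\widetilde{d}(\text{bounded})$-ness via the splitting of the universal cover $\widetilde{X}\times\widetilde{N}$ and the pulled-back primitive of $\pi^{*}\alpha$. The only difference is that you carry out explicitly the two steps the paper leaves to the reader --- the Fubini computation giving strict nefness and the positive decomposition of $p_{1}^{*}\alpha_{\epsilon/2}+\epsilon\,\omega_{X\times N}^{p}$ --- which, since the hypothesis of $p$-K\"ahler hyperbolicity provides a genuinely positive $(p,p)$-form $\alpha$ (so one may take $\alpha_{\epsilon}=\alpha$, as you note), close without difficulty.
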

\begin{proof}
Let $\pi_{1}:\widetilde{X}\rightarrow X$ and $\pi_{2}:\widetilde{N}\rightarrow N$ be the universal covering map. Then we have the following diagram
\begin{equation*}
\begin{tikzcd}
X\times N  \arrow[d, "p_{1}"]
& \widetilde{X}\times \widetilde{N} \arrow[l, "\pi_{1}\times\pi_{2}"] \arrow[d, "\widetilde{p}_{1}"] \\
X 
& \widetilde{X} \arrow[l, "\pi_{1}"]
\end{tikzcd}
\end{equation*}
Suppose $\alpha>0$ is a positive $(p,p)$-form on $X$ such that $\pi_{1}^{*}\alpha=\widetilde{d}\beta$ and $\beta$ is bounded. Then $p_{1}^{*}[\alpha]$ is nef by Proposition \ref{p:1}, one can also check that it is strictly nef. The map
$\pi_{1}\times\pi_{2}:\widetilde{X}\times\widetilde{N}\rightarrow X\times N$ is a universal covering map. Then $(\pi_{1}\times\pi_{2})^{*}p_{1}^{*}\alpha=\widetilde{p}_{1}^{*}\pi_{1}^{*}\alpha=\widetilde{d}(\widetilde{p}_{1}^{*}\beta)$. Let $p_{1}^{*}\omega_{X}+p_{2}^{*}\omega_{N}$ be a K\"ahler form on $X\times N$, then $\widetilde{p}_{1}^{*}\beta$ is bounded with respect to this metric. 
\end{proof}

\begin{prop}
Let $X$ be a $p$-K\"ahler hyperbolic manifold and $\nu:Y\rightarrow X$ is a modification. Then $Y$ is weakly $p$-K\"ahler hyperbolic manifold.
\end{prop}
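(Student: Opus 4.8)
The plan is to show that the pulled-back class $\nu^{*}[\alpha]\in H^{p,p}(Y,\mathbb{R})$ witnesses the weak $p$-K\"ahler hyperbolicity of $Y$, where $[\alpha]$ is represented by a positive $(p,p)$-form $\alpha>0$ on $X$ with $\pi_{X}^{*}\alpha=\widetilde{d}\beta$ and $\beta$ bounded on $\widetilde{X}$. Fix a K\"ahler metric $\omega_{Y}$ on $Y$. First I would produce the lift of $\nu$ to the universal covers: since $\widetilde{Y}$ is simply connected, the composition $\nu\circ\pi_{Y}:\widetilde{Y}\to X$ lifts through the covering $\pi_{X}:\widetilde{X}\to X$ to a holomorphic map $\widetilde{\nu}:\widetilde{Y}\to\widetilde{X}$ with $\pi_{X}\circ\widetilde{\nu}=\nu\circ\pi_{Y}$. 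Note that no identification of fundamental groups is needed for this step, only the lifting criterion applied to the simply connected $\widetilde{Y}$.

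Next I would check that $\nu^{*}\alpha$ is $\widetilde{d}(\text{bounded})$. Pulling back the primitive relation along $\widetilde{\nu}$ gives
\begin{equation*}
\pi_{Y}^{*}(\nu^{*}\alpha)=(\nu\circ\pi_{Y})^{*}\alpha=\widetilde{\nu}^{*}(\pi_{X}^{*}\alpha)=\widetilde{\nu}^{*}(\widetilde{d}\beta)=\widetilde{d}(\widetilde{\nu}^{*}\beta),
\end{equation*}
so $\widetilde{\nu}^{*}\beta$ is a primitive of $\pi_{Y}^{*}(\nu^{*}\alpha)$ on $\widetilde{Y}$. To see it is bounded, observe that the comparison norm $\|d\widetilde{\nu}\|_{\widetilde{\omega}_{Y},\widetilde{\omega}_{X}}$ is equivariant and hence equals the pullback to $\widetilde{Y}$ of the continuous function $\|d\nu\|_{\omega_{Y},\omega_{X}}$ on the compact manifold $Y$, which is therefore bounded. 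Since $\beta$ is a $(2p-1)$-form with $\|\beta\|_{L^{\infty}(\widetilde{X},\widetilde{\omega}_{X})}<\infty$, the pointwise estimate $|\widetilde{\nu}^{*}\beta|_{\widetilde{\omega}_{Y}}\leq C\,\|d\widetilde{\nu}\|^{2p-1}\,(|\beta|_{\widetilde{\omega}_{X}}\circ\widetilde{\nu})$ yields $\|\widetilde{\nu}^{*}\beta\|_{L^{\infty}(\widetilde{Y},\widetilde{\omega}_{Y})}<\infty$.

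Then I would establish strict nefness. Because $\alpha>0$ and the pullback of a weakly positive $(p,p)$-form by a holomorphic map is again weakly positive, $\nu^{*}\alpha$ is weakly positive on all of $Y$; adding the transverse form $\epsilon\,\omega_{Y}^{p}$ keeps us in the interior of the positive cone, so $\nu^{*}\alpha+\epsilon\,\omega_{Y}^{p}>0$ for every $\epsilon>0$, and Proposition \ref{p:1} shows $\nu^{*}[\alpha]$ is nef. For bigness, let $E\subset Y$ be the exceptional locus of $\nu$, a proper analytic subset (hence of measure zero) such that $\nu|_{Y\setminus E}$ is a biholomorphism onto its image. On $Y\setminus E$ the form $\nu^{*}\alpha$ is strictly positive, so $\nu^{*}\alpha\wedge\omega_{Y}^{n-p}$ is a strictly positive volume form on the full-measure open set $Y\setminus E$ and nonnegative elsewhere, giving
\begin{equation*}
\int_{Y}\nu^{*}\alpha\wedge\omega_{Y}^{n-p}=\int_{Y\setminus E}\nu^{*}\alpha\wedge\omega_{Y}^{n-p}>0 .
\end{equation*}
Thus $\nu^{*}[\alpha]$ is strictly nef and $\widetilde{d}(\text{bounded})$, so $Y$ is weakly $p$-K\"ahler hyperbolic.

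The step I expect to be the main obstacle is controlling the positivity of $\nu^{*}\alpha$ where $\nu$ degenerates: the pullback necessarily loses strict positivity along the exceptional locus $E$, so one must argue that this degeneration is confined to a measure-zero set in order to retain bigness, and that the wedge of the merely weakly positive $\nu^{*}\alpha$ with $\omega_{Y}^{m-p}$ still defines a nonnegative measure on every subvariety $V$. A secondary point worth making explicit is the standing assumption that $Y$ is K\"ahler, so that a metric $\omega_{Y}$ exists; this holds for the blow-ups along smooth centers that dominate any modification, to which one may reduce.
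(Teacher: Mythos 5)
Your proof is correct and takes essentially the same route as the paper's: pull back the positive $\widetilde{d}(\text{bounded})$ $(p,p)$-form along $\nu$, note that $\nu^{*}\alpha\geq 0$ is strictly nef, and verify $\widetilde{d}$-boundedness on the universal cover. The paper merely asserts these steps, and your added details (the lift $\widetilde{\nu}$ via simple connectivity of $\widetilde{Y}$, the bound on $\widetilde{\nu}^{*}\beta$ through the equivariant norm $\|d\nu\|$ on the compact $Y$, nefness via Proposition \ref{p:1}, and bigness from strict positivity off the exceptional locus) are exactly the verifications it leaves implicit.
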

\begin{proof}
Let $\alpha>0$ be a $(p,p)$-form on $X$ such that $\pi_{1}^{*}\alpha=\widetilde{d}\beta$ and $\beta$ is bounded. Then $\nu^{*}\alpha\geq 0$ is strictly nef. One can also show that it is $\widetilde{d}(\text{bounded})$. So $Y$ is weakly $p$-K\"ahler hyperbolic manifold.
\end{proof}
One of the most fundamental questions is whether weak $p$-K\"ahler hyperbolicity is birationally invariant. Under the setting of \cite{FWW}, they proved that weak $p$-K\"ahler hyperbolicity is birationally invariant. Their proof was quite difficult. In this paper, we do not discuss this issue.

\section{Properties of weakly $p$-K\"ahler hyperbolic manifold}\label{sec:weakly}

\begin{prop}[linear isoperimetric inequality] 
Let $X$ be a weakly $p$-K\"ahler hyperbolic manifold, $g$ any Riemannian metric on $X$, and consider the Riemannian universal cover $\pi:(\widetilde{X},\widetilde{g})\rightarrow (X,g)$. Then there exists a constant $C > 0$ such that for any bounded domain $\Omega\subset \widetilde{X}$ with $C^1$ boundary, we have the following linear isoperimetric inequality
\begin{equation*}
\Vol_{\widetilde{g}}(\Omega)\leq C \Area_{\widetilde{g}}(\partial\Omega)
\end{equation*}
\end{prop}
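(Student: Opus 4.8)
The plan is to imitate Gromov's original argument for the linear isoperimetric inequality on K\"ahler hyperbolic manifolds, adapted to the weakly $p$-K\"ahler hyperbolic setting. The starting point is the strictly nef $(p,p)$-class $[\alpha]$ which is $\widetilde{d}(\text{bounded})$: on the universal cover we have $\pi^{*}\alpha = \widetilde{d}\beta$ with $\|\beta\|_{L^{\infty}(\widetilde{X},\widetilde{\og})} \le C_{0} < \infty$. Since any two Riemannian metrics on the compact manifold $X$ are quasi-isometric, their pullbacks to $\widetilde{X}$ are quasi-isometric with the same constants, so it suffices to prove the inequality for $\widetilde{g}$ coming from a convenient K\"ahler metric $\og$; the constant $C$ then absorbs the quasi-isometry factors. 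The core idea is that $\alpha$ behaves like a positive multiple of $\og^{p}$ \emph{after} a modification (or in a Zariski-generic sense), so that $\pi^{*}(\og^{n})$ is controlled by $\widetilde{d}\beta \wedge \pi^{*}\og^{n-p}$, turning a volume integral into a boundary integral by Stokes.

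Concretely, the steps would be: (1) Because $[\alpha]$ is strictly nef, $\int_{X}\alpha\wedge\og^{n-p}>0$; using Proposition \ref{p:1} in reverse together with a regularization of $\alpha$, arrange a smooth representative $\alpha'\in[\alpha]$ and a constant $c>0$ with $\alpha' \wedge \og^{n-p} \ge c\,\og^{n}$ pointwise on $X$ outside a proper analytic set — more robustly, pass to the cohomological identity $\int_{\Omega}\pi^{*}(\alpha\wedge\og^{n-p}) \ge c\,\Vol_{\widetilde{g}}(\Omega)$, which only needs the inequality at the level of integrals and can be derived by comparing the two nef classes $[\alpha\wedge\og^{n-p}]$ and $c[\og^{n}]$. (2) Write $\pi^{*}(\alpha\wedge\og^{n-p}) = \widetilde{d}(\beta\wedge\pi^{*}\og^{n-p})$, which is legitimate since $\og$ is closed, and note the primitive form $\gamma := \beta\wedge\pi^{*}\og^{n-p}$ is bounded in $\widetilde{g}$-norm by $C_{0}\,\|\og\|^{n-p}_{\infty}$. (3) Apply Stokes' theorem on the bounded domain $\Omega$ with $C^1$ boundary: $\int_{\Omega}\widetilde{d}\gamma = \int_{\pt\Omega}\gamma$, and bound the right-hand side by $\|\gamma\|_{L^{\infty}} \cdot \Area_{\widetilde{g}}(\pt\Omega)$ since $\gamma$ restricted to the $(2n-1)$-dimensional boundary is pointwise bounded by its sup-norm times the induced volume form. (4) Combining (1) and (3) gives $c\,\Vol_{\widetilde{g}}(\Omega) \le \|\gamma\|_{L^{\infty}}\,\Area_{\widetilde{g}}(\pt\Omega)$, i.e., the claim with $C = \|\gamma\|_{L^{\infty}}/c$.

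The main obstacle is step (1): making precise the passage from "strictly nef $(p,p)$-class with positive top self-intersection" to a usable pointwise or integral lower bound $\pi^{*}(\alpha\wedge\og^{n-p}) \gtrsim \Vol_{\widetilde{g}}$. For $p=1$ this is Gromov's observation that a big nef class has a representative dominating $c\og$ off its null locus, but the domain $\Omega$ may meet (the preimage of) that null locus, so one must be careful that the exceptional set contributes zero volume — which it does, being a proper analytic subvariety hence of measure zero — and that the Stokes computation is unaffected, e.g., by first establishing the inequality on $\widetilde{X}\setminus\pi^{-1}(\text{Null})$ and extending. A cleaner route, which I would pursue, is to avoid pointwise positivity entirely: observe that $[\alpha\wedge\og^{n-p}] - c[\og^{n}]$ is a $(n,n)$-class which, for small enough $c>0$, still integrates nonnegatively over $X$ (this is exactly strict nefness at top degree), and then use that $\pi$ is a local isometry together with a covering/exhaustion argument — tiling $\Omega$ by fundamental-domain-sized pieces — to transfer the closed-manifold integral bound $\int_{X}\alpha\wedge\og^{n-p} \ge c\int_{X}\og^{n}$ to the domain $\Omega$ up to a boundary error term of the right order. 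The remaining steps (2)–(4) are the routine Stokes-and-sup-norm estimate and carry over verbatim from the $p=1$ case once the quasi-isometry reduction is in place.
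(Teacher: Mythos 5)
Your steps (2)--(4) (Stokes plus the sup-norm bound on the primitive $\gamma=\beta\wedge\pi^{*}\og^{n-p}$) match the paper, but step (1) --- the passage from strict nefness to $\int_{\Omega}\pi^{*}(\alpha\wedge\og^{n-p})\ge c\,\Vol_{\widetilde{g}}(\Omega)$ --- has a genuine gap, and you correctly flag it as the main obstacle without resolving it. Strict nefness as defined here is purely an integral condition over closed subvarieties of $X$; it gives no pointwise representative with $\alpha'\wedge\og^{n-p}\ge c\,\og^{n}$ (even off an analytic set), and the cohomological inequality $\int_{X}\alpha\wedge\og^{n-p}\ge c\int_{X}\og^{n}$ on the \emph{closed} manifold says nothing about integrals over a bounded domain $\Omega$ with boundary in the noncompact cover. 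Your proposed fix --- tiling $\Omega$ by fundamental domains ``up to a boundary error term of the right order'' --- is precisely the hard estimate you would need to prove, and it is not carried out; as stated the argument is circular (controlling the error term is essentially equivalent to the isoperimetric inequality you are trying to establish).

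The paper avoids this entirely with a normalization trick that you are missing. Rescale $g$ so that $\int_{X}dv_{g}=\int_{X}\alpha\wedge\og^{n-p}$; this is possible exactly because the class is strictly nef, i.e.\ $\int_{X}\alpha\wedge\og^{n-p}>0$, and this is the \emph{only} use of positivity in the whole proof. Since $\int_{X}:H^{2n}(X,\mathbb{R})\to\mathbb{R}$ is an isomorphism, the two top-degree forms are then cohomologous: $dv_{g}-\alpha\wedge\og^{n-p}=d\rho$ for a smooth $(2n-1)$-form $\rho$ on the compact base, so $\pi^{*}dv_{g}=\widetilde{d}(\beta\wedge\pi^{*}\og^{n-p}+\pi^{*}\rho)$ with a primitive that is bounded ($\rho$ is a fixed form on a compact manifold). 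Stokes applied to the volume form itself then gives the inequality directly, with no comparison between $\alpha\wedge\og^{n-p}$ and $\og^{n}$, no null-locus issues, and no need for your quasi-isometry reduction to a K\"ahler metric. You should replace step (1) by this normalization argument; the rest of your write-up then goes through.
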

\begin{proof}
Let $\omega$ be a K\"ahler form and $[\alpha]\in H^{p,p}(X,\mathbb{R})$ be a strictly nef class which is $\widetilde{d}(\text{bounded})$. Suppose $\pi^{*}\alpha=\widetilde{d}(\beta)$, let $\eta=\beta\wedge\pi^{*}\omega^{n-p}$. Then $\widetilde{d}\eta=\pi^{*}\mu\wedge\pi^{*}\omega^{n-p}$ and $\eta$ is bounded.  Given any metric $g$ on $X$, normalize it such that
\begin{equation*}
\int_{X}dv_{g}=\int_{X}\mu\wedge\omega^{n-p}.
\end{equation*}
As $\int_{X}:H^{2n}(X,\mathbb{R})\rightarrow\mathbb{R}$ is an isomorphism. Then there exists a $(2n-1)$-form $\rho$ on $X$ such that $dv_{g}-\mu\wedge\omega^{n-p}=d\rho$. Then $\pi^{*}dv_{g}=\widetilde{d}(\eta+\pi^{*}\rho)$ and
\begin{equation*}
|\eta+\pi^{*}\rho|_{\tilde{g}}\leq C.
\end{equation*}
Therefore, using Stokes’ formula we have
\begin{equation*}
\begin{split}
\Vol_{\tilde{g}}(\Omega)=&\int_{\Omega}\pi^{*}dv_{g}=\int_{\Omega}\widetilde{d}(\eta+\pi^{*}\rho)\\
=&\int_{\partial\Omega}(\eta+\pi^{*}\rho)\leq C\Area_{\tilde{g}}(\partial\Omega).
\end{split}
\end{equation*}
\end{proof}

\begin{cor}
Let $X$ be a weakly $p$-K\"hler hyperbolic manifold, with universal cover $\pi:\widetilde{X}\to X$, and let $g$ be any Riemannian metric on $X$. Then zero is not in the spectrum of the Laplace-Beltrami operator of $(\widetilde{X},\pi^{*}g)$.
\end{cor}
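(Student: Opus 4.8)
The plan is to deduce the corollary directly from the linear isoperimetric inequality just established, following the classical Cheeger-type argument. Recall that for a complete Riemannian manifold $(M,h)$, the bottom of the spectrum of the Laplace--Beltrami operator acting on functions satisfies the variational characterization
\begin{equation*}
\lambda_0(M)=\inf_{f\in C_c^\infty(M),\, f\not\equiv 0}\frac{\int_M|df|_h^2\,dv_h}{\int_M|f|^2\,dv_h},
\end{equation*}
so it suffices to produce a constant $c>0$ with $\int_{\widetilde X}|df|^2\,dv_{\widetilde g}\geq c\int_{\widetilde X}|f|^2\,dv_{\widetilde g}$ for all compactly supported smooth $f$. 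First I would recall Cheeger's inequality $\lambda_0(\widetilde X)\geq h(\widetilde X)^2/4$, where the Cheeger isoperimetric constant is
\begin{equation*}
h(\widetilde X)=\inf_{\Omega}\frac{\Area_{\widetilde g}(\partial\Omega)}{\Vol_{\widetilde g}(\Omega)},
\end{equation*}
the infimum being over bounded domains $\Omega\subset\widetilde X$ with smooth (or $C^1$) boundary. The linear isoperimetric inequality of the preceding proposition says precisely that $\Vol_{\widetilde g}(\Omega)\leq C\,\Area_{\widetilde g}(\partial\Omega)$ for all such $\Omega$, hence $h(\widetilde X)\geq 1/C>0$, and therefore $\lambda_0(\widetilde X)\geq 1/(4C^2)>0$, so $0$ is not in the spectrum.

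Carrying this out, the key steps in order are: (i) state the variational formula for $\lambda_0$ on the complete manifold $(\widetilde X,\pi^*g)$ (completeness holds because $(X,g)$ is compact, hence complete, and covers are complete); (ii) invoke the co-area formula to pass from a test function $f$ to the level sets $\{|f|>t\}$, writing $\int|df|\,dv=\int_0^\infty\Area(\partial\{|f|>t\})\,dt$ and $\int|f|\,dv=\int_0^\infty\Vol(\{|f|>t\})\,dt$; (iii) apply the linear isoperimetric inequality to each superlevel set $\Omega_t=\{|f|>t\}$ (which has $C^1$ boundary for a.e.\ $t$ by Sard's theorem) to get $\int|df|\,dv\geq C^{-1}\int|f|\,dv$; (iv) replace $f$ by $f^2$ and apply Cauchy--Schwarz, $\int|d(f^2)|\,dv=2\int|f|\,|df|\,dv\leq 2\bigl(\int|f|^2\,dv\bigr)^{1/2}\bigl(\int|df|^2\,dv\bigr)^{1/2}$, to obtain $\int|df|^2\,dv\geq \tfrac{1}{4C^2}\int|f|^2\,dv$; (v) conclude $\lambda_0(\widetilde X)\geq 1/(4C^2)>0$.

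The main obstacle, such as it is, is mostly bookkeeping rather than a genuine difficulty: one must be careful that the bounded domains $\Omega$ appearing in the proposition have $C^1$ boundary, whereas the superlevel sets $\{|f|>t\}$ of a smooth compactly supported function are bounded with smooth boundary only for regular values $t$; by Sard's theorem almost every $t$ is a regular value, which is enough for the co-area argument. One should also note that the proposition is stated for an arbitrary Riemannian metric $g$ on $X$, so the constant $C$ depends on $g$ but is finite, which is all that is needed. Since $\widetilde X$ is non-compact (the fundamental group of a weakly $p$-K\"ahler hyperbolic manifold is infinite, as the isoperimetric inequality forces infinite volume), there is no issue of the constant function lying in $L^2$; the statement is genuinely about the continuous spectrum of $\Delta$ on $L^2(\widetilde X,\pi^*g)$ having a gap at $0$.
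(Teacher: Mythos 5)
Your argument is correct and is precisely the one the paper delegates to the citation of Bei--Diverio--Eyssidieux--Trapani: the linear isoperimetric inequality of the preceding proposition bounds the Cheeger constant of $(\widetilde{X},\pi^{*}g)$ below by $1/C$, and Cheeger's inequality (proved via the co-area formula and the substitution $f\mapsto f^{2}$ exactly as you describe) then gives $\lambda_{0}\geq 1/(4C^{2})>0$. The paper itself offers no details here, so your write-up simply supplies the standard proof behind the reference; no gaps.
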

\begin{proof}
See \cite{BDET}.
\end{proof}

\begin{prop}
Let $X$ be a weakly $p$-K\"ahler exact manifold and let $l:Y\rightarrow X$ be an irreducible closed $q$-dimensional complex analytic subvariety, $q\geq p$, such that $Y\nsubseteq Z_{X}$. Then the image
$Im(l_{*}:\pi_{1}(Y)\rightarrow\pi_{1}(X))$ is infinite.
\end{prop}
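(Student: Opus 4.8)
The plan is to mimic the classical Gromov--type argument (as in \cite{BDET} for the case $p=1$) adapted to the $(p,p)$ setting, using the subvariety $Y$ itself as the ambient space on which we run the isoperimetric/Čech--type inequality. First I would suppose, for contradiction, that $G:=Im(l_{*}:\pi_{1}(Y)\to\pi_{1}(X))$ is finite. Passing to a finite étale cover of $Y$ we may assume $l_{*}$ is trivial, so that the restriction to $Y$ of the universal cover $\pi:\widetilde{X}\to X$ splits: writing $\widetilde{Y}\to Y$ for the pullback covering, $\widetilde{Y}$ is a disjoint union of copies of $Y$ and in particular each component is compact. Since $Y\nsubseteq Z_{X}$, by definition of $Z_{X}=\cap_{[\alpha]\in\mathcal{W}_X}Null([\alpha])$ there is a K\"ahler exact strictly nef class $[\alpha]\in\mathcal{W}_{X}$ with $Y\nsubseteq Null([\alpha])$; hence for the dimension $q$ of $Y$ we get $\int_{Y}l^{*}\alpha\wedge\omega^{q-p}>0$ (the restriction of $\alpha$ to $Y$ is still nef, and $Y$ is not in the null locus, so this top intersection number on $Y$ is strictly positive).

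Next I would set $\eta:=\beta\wedge\pi^{*}\omega^{q-p}$ where $\pi^{*}\alpha=\widetilde{d}\beta$ with $\|\beta\|_{L^{\infty}}<\infty$; then $l^{*}$ of this data pulls back to $\widetilde{Y}$ as a bounded primitive of $\widetilde{l^{*}\alpha\wedge\omega^{q-p}}$, i.e. a bounded $(2q-1)$-form $\widetilde{\eta}$ on each compact component of $\widetilde{Y}$ with $\widetilde{d}\widetilde{\eta}$ equal to the pullback of the positive-mass form $l^{*}\alpha\wedge\omega^{q-p}$. But on a \emph{compact} manifold $\widetilde{Y}$ (a component of the cover), Stokes' theorem forces $\int_{\widetilde{Y}}\widetilde{d}\widetilde{\eta}=0$, contradicting the fact that this integral equals $\deg(\widetilde{Y}\to Y)\cdot\int_{Y}l^{*}\alpha\wedge\omega^{q-p}>0$. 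This contradiction shows $G$ cannot be finite. (Equivalently, one runs the linear isoperimetric inequality on $\widetilde{Y}$: compactness of $\widetilde{Y}$ would let $\Omega=\widetilde{Y}$ have empty boundary while positive volume with respect to the closed form $l^{*}\alpha\wedge\omega^{q-p}$, again impossible.)

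The main obstacle is making precise the claim that $l^{*}\alpha$ stays strictly nef \emph{on $Y$} once $Y\nsubseteq Null([\alpha])$, and that the relevant top self-intersection on $Y$ is the one that is positive: one must check that membership $Y\subset Null([\alpha])$ in the sense of the paper's definition is exactly the failure of $\int_{Y}\alpha\wedge\omega^{q-p}>0$, using that $Y$ is an irreducible analytic subvariety of dimension $q\ge p$ — this is where the somewhat ad hoc definition of $Null$ for $(p,p)$ classes has to be handled with care, and one should address the possibility that $Y$ is singular (pass to a resolution $\widehat{Y}\to Y$ and pull everything back, noting nefness and the positivity of the integral are preserved and boundedness of the primitive is preserved since the resolution map is an isometry away from a measure-zero set and the forms extend). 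The other minor point is justifying the reduction to a finite cover where $l_{*}$ is trivial: if $G$ is finite of order $k$, the cover of $Y$ corresponding to $\ker(l_{*})$ is finite of degree $k$, and replacing $Y$ by that cover (and $l$ by the composite) does not change $Y\nsubseteq Z_{X}$ nor the exactness/boundedness hypotheses, while it does make $\widetilde{Y}\to Y$ a trivial (hence compact-fibered) cover — then a single compact component carries the Stokes contradiction.
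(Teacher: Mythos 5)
Your argument is correct and is essentially the same as the paper's: both assume the image of $l_{*}$ is finite, pass to a finite cover so the map lifts to the universal cover $\widetilde{X}$, obtain a compact cycle on which the exact form $\pi^{*}\alpha\wedge\pi^{*}\omega^{q-p}$ must integrate to zero by Stokes, and contradict the strict positivity $\int_{Y}\alpha\wedge\omega^{q-p}>0$ coming from $Y\nsubseteq Z_{X}$. Your additional remarks on resolving singularities of $Y$ and on unwinding the definition of $Null([\alpha])$ are sensible refinements of the same proof, not a different route.
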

\begin{proof}
If $Y\nsubseteq Z_{X}$, then by definition, there exists a strictly nef  $(p,p)$-cohomology $[\alpha]$ which is $\widetilde{d}$-exact such that  $\int_{Y}\alpha\wedge\omega^{q-p}>0$. If the image
$Im(l_{*}:\pi_{1}(Y)\rightarrow\pi_{1}(X))$ were finite, then passing to a finite cover $\nu:\hat{Y}\to Y$, we would have
that $l\circ\nu:\hat{Y}\rightarrow X$ lifts to the universal cover $\widetilde{f}:\hat{Y}\rightarrow \widetilde{X}$. We would thus obtain a compact
subvariety $\hat{Y}$ of $\widetilde{X}$ such that $\pi|_{\hat{Y}}:\hat{Y}\to Y$ is finite. We would therefore get a contradiction,
since on the one hand this would imply $\int_{\widetilde{Y}}\pi^{*}\alpha\wedge\omega^{q-p}>0$ and on the other hand this integral
has to be zero since $\pi^{*}\alpha\wedge\omega^{q-p}$ is exact.
\end{proof}

\begin{thm}\label{thm:2}
If $(X,\omega)$ is a weakly $p$-K\"ahler exact manifolds. Let $f:\mathbb{CP}^{k}\times\mathbb{CP}^{q-k}\rightarrow X$, $q\geq p$, be a holomorphic map non-degenerate at some point. Then we have $Im(f)\subset Z_{X}$.
\end{thm}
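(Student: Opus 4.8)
The plan is to show that the image $Y:=Im(f)$, which is an irreducible $q$-dimensional subvariety of $X$ since $f$ is non-degenerate at a point, satisfies $\int_{Y}\alpha\wedge\omega^{q-p}=0$ for every class $[\alpha]\in\mathcal{W}_{X}$. By the definition of the null locus this forces $Y\subset Null([\alpha])$ for each such $[\alpha]$, and hence $Y$ lies in the intersection $Z_{X}=\cap_{[\alpha]\in\mathcal{W}_{X}}Null([\alpha])$. So I would fix an arbitrary K\"ahler exact strictly nef class $[\alpha]\in\mathcal{W}_{X}$ and write $\pi^{*}\alpha=\widetilde{d}\beta$ on the universal cover, with $\beta$ bounded (though boundedness will not actually be needed here).

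The first genuine step is to lift $f$. Since both factors $\mathbb{CP}^{k}$ and $\mathbb{CP}^{q-k}$ are simply connected, the product $M:=\mathbb{CP}^{k}\times\mathbb{CP}^{q-k}$ is simply connected, so the holomorphic map $f$ factors through the universal covering: there is a holomorphic $\widetilde{f}:M\to\widetilde{X}$ with $\pi\circ\widetilde{f}=f$. Consequently $f^{*}\alpha=\widetilde{f}^{*}\pi^{*}\alpha=\widetilde{f}^{*}\widetilde{d}\beta=d(\widetilde{f}^{*}\beta)$ is an exact $(p,p)$-form on $M$.

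Next I would exploit that $M$ is compact of complex dimension $q$. Because $f^{*}\omega^{q-p}$ is closed,
\[
f^{*}(\alpha\wedge\omega^{q-p})=d(\widetilde{f}^{*}\beta)\wedge f^{*}\omega^{q-p}=d\bigl(\widetilde{f}^{*}\beta\wedge f^{*}\omega^{q-p}\bigr)
\]
is an exact form of top degree $2q$ on the closed manifold $M$, so Stokes' theorem gives $\int_{M}f^{*}(\alpha\wedge\omega^{q-p})=0$. Since $f$ is non-degenerate at a point, it is generically finite of some degree $d\geq 1$ onto its image $Y$, and the projection formula yields $\int_{M}f^{*}(\alpha\wedge\omega^{q-p})=d\int_{Y}\alpha\wedge\omega^{q-p}$. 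Hence $\int_{Y}\alpha\wedge\omega^{q-p}=0$, which is exactly what was needed; as $[\alpha]$ was arbitrary we conclude $Im(f)=Y\subset Z_{X}$.

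The step I expect to require the most care is the passage from the vanishing integral over $M$ to the vanishing over $Y$, namely the degree formula $\int_{M}f^{*}\gamma=d\int_{Y}\gamma$. This rests on the fact that non-degeneracy at one point forces $df$ to have maximal rank $q$ generically, so that $Y$ is genuinely $q$-dimensional and $f:M\to Y$ is generically finite; one then integrates over the smooth locus of $Y$, which has full measure, and checks that the singular locus contributes nothing. A secondary point to address is the borderline case $q=n$: if $f$ were surjective the same computation would force $\int_{X}\alpha\wedge\omega^{n-p}=0$, contradicting strict nefness, so $Y$ is automatically a \emph{proper} subvariety and genuinely qualifies as one of the admissible $Z$'s of dimension $\geq p$ in the definition of $Null([\alpha])$.
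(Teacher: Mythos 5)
Your proof is correct and follows essentially the same route as the paper's: lift $f$ through the universal cover via simple connectivity of $\mathbb{CP}^{k}\times\mathbb{CP}^{q-k}$, apply Stokes to the exact top-degree form $f^{*}(\alpha\wedge\omega^{q-p})$, and push the vanishing of the integral down to the image to conclude $Im(f)\subset Null([\alpha])$ for every $[\alpha]\in\mathcal{W}_{X}$. The only (minor) differences are that you treat $Im(f)$ as a single irreducible variety with a mapping degree $d$, whereas the paper writes $\sum_{i}n_{i}\int_{Z_{i}}\alpha\wedge\omega^{q-p}=0$ over irreducible components with multiplicities and implicitly uses nefness to kill each term, and that you explicitly verify $Im(f)$ is a \emph{proper} subvariety in the borderline case $q=n$ (via strict nefness), a point the paper leaves unaddressed.
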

\begin{proof} 
Let $\pi:\widetilde{X}\rightarrow X$ be the universal cover map. Since $\mathbb{CP}^{k}\times\mathbb{CP}^{q-k}$ is simply connected. Then we have a lift map $\widetilde{f}:\mathbb{CP}^{k}\times\mathbb{CP}^{q-k}\rightarrow \widetilde{X}$ satisfies $\pi\circ\widetilde{f}=f$. 
\begin{equation*}
\begin{tikzcd}
& \widetilde{X} \arrow[d, "\pi"] \\
\mathbb{CP}^{k}\times\mathbb{CP}^{q-k} \arrow[ru,"\widetilde{f}"] \arrow[r,"f"]
& X 
\end{tikzcd}
\end{equation*}
Let $[\alpha]\in H^{p,p}(X,\mathbb{R})$ be a strictly nef class and $\pi^{*}\alpha=\widetilde{d}\beta$. Then $f^{*}(\alpha\wedge\omega^{q-p})=\widetilde{f}^{*}\pi^{*}(\alpha\wedge\omega^{q-p})=\widetilde{f}^{*}\widetilde{d}(\beta\wedge\pi^{*}\omega^{q-p})=d\widetilde{f}^{*}(\beta\wedge\pi^{*}\omega^{q-p})$. So $\int_{\mathbb{CP}^{k}\times\mathbb{CP}^{q-k}}f^{*}(\alpha\wedge\omega^{q-p})=0$. Since $f$ is non-degenerate at some point, then $Im(f)$ is a $q$-dimensional subvariety of $X$. Suppsose $Z_{i}$ is its irreducible component and $n_{i}$ is its multiplicity. Then $\sum_{i}n_{i}\int_{Z_{i}}\alpha\wedge\omega^{q-p}=0$. Since $[\mu]$ is arbitrary, it follows that $Im(f)\subset Z_{X}$.
\end{proof}

\begin{cor}
If $q\geq\dim Z_{X}$, then there does not exist a non-degenerate holomorphic map $f:\mathbb{CP}^{k}\times\mathbb{CP}^{q-k}\rightarrow X$. 
\end{cor}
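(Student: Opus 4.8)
The plan is to argue by contradiction: I would feed a hypothetical non-degenerate map into Theorem \ref{thm:2} and then weigh the dimension of its image against $\dim Z_{X}$, the equality case $q=\dim Z_{X}$ being where the real work lies. So suppose a holomorphic map $f:\mathbb{CP}^{k}\times\mathbb{CP}^{q-k}\to X$, non-degenerate at some point, exists. Since $\dim_{\mathbb{C}}(\mathbb{CP}^{k}\times\mathbb{CP}^{q-k})=k+(q-k)=q$, non-degeneracy at a point means $df$ attains maximal rank $q$ there, so $f$ is generically an immersion and $Im(f)$ is an irreducible $q$-dimensional analytic subvariety of $X$. Working in the range $q\geq p$ in which Theorem \ref{thm:2} applies, I conclude $Im(f)\subset Z_{X}$.

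Next I would read off the dimensions. From $Im(f)\subset Z_{X}$ and $\dim Im(f)=q$ I get $\dim Z_{X}\geq q$, which together with the hypothesis $q\geq\dim Z_{X}$ forces $\dim Z_{X}=q$ and exhibits $Im(f)$ as an irreducible component of $Z_{X}$ of top dimension. If the hypothesis happens to be strict, $q>\dim Z_{X}$, the two inequalities already clash and the proof is finished; likewise if $Z_{X}=\emptyset$, for then $Im(f)\subset Z_{X}$ is impossible. Hence all the difficulty sits in the borderline case $q=\dim Z_{X}$ with $Z_{X}\neq\emptyset$, where I must exclude the possibility that a $q$-dimensional image of $\mathbb{CP}^{k}\times\mathbb{CP}^{q-k}$ occurs as a top-dimensional component $W$ of the degeneracy set.

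This borderline case is where I expect the main obstacle. Cohomology alone will not decide it: for a strictly nef $\widetilde{d}$-exact class $[\alpha]$ one only knows $\int_{W}\alpha\wedge\omega^{q-p}=0$, and since such $\alpha$ is merely nef and not pointwise positive this is perfectly consistent, as Proposition \ref{p:1} shows upon letting $\epsilon\to0$ in $\int_{W}(\alpha_{\epsilon}+\epsilon\omega^{p})\wedge\omega^{q-p}>0$. To close the gap I would move the map rather than the class: viewing $f$ as a point of the space of morphisms $\mathrm{Mor}(\mathbb{CP}^{k}\times\mathbb{CP}^{q-k},X)$, I would aim to show that $f$ deforms in a family of non-degenerate maps whose images sweep out a set strictly larger than the fixed proper subvariety $W$; a generic member would then have image not contained in $Z_{X}$, contradicting Theorem \ref{thm:2}. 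The crux is guaranteeing genuine motion of the image, i.e. that the deformation space at $f$ (governed by $H^{0}(f^{*}T_{X})$) is not entirely absorbed by automorphisms of $\mathbb{CP}^{k}\times\mathbb{CP}^{q-k}$ that preserve $W$; failing that, one would instead need structural information forbidding $Z_{X}$ from carrying a top-dimensional component dominated by a product of projective spaces. I would emphasize that in the genuinely weak, non-K\"ahler regime this borderline step does not follow from the cohomological input supplied by Theorem \ref{thm:2} alone, and is the delicate point of the corollary.
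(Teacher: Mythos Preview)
The paper gives no proof of this corollary; it is stated as an immediate consequence of Theorem~\ref{thm:2}. The intended one-line argument is exactly your strict-inequality case: if such an $f$ existed, the theorem forces $Im(f)\subset Z_{X}$, whence $q=\dim Im(f)\leq\dim Z_{X}$, contradicting $q>\dim Z_{X}$.

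You are correct that the borderline case $q=\dim Z_{X}$ is not handled by this reasoning, and the paper does not address it separately; either the author intended a strict inequality or simply passed over the point. Your attempt to rescue the equality case via deforming $f$ inside the space of holomorphic maps is, as you yourself concede, not a proof: nothing in the setup guarantees that such deformations exist, let alone that they move the image off a top-dimensional component of $Z_{X}$, and the purely cohomological input of Theorem~\ref{thm:2} yields nothing further once $\int_{Im(f)}\alpha\wedge\omega^{q-p}=0$ is already compatible with $Im(f)\subset Z_{X}$. So you have correctly located a gap that the paper glosses over, but your proposal does not close it. In summary: for $q>\dim Z_{X}$ your argument coincides with the paper's (implicit) proof; for $q=\dim Z_{X}$ neither you nor the paper supply a complete argument.
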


Let $f:\mathbb{C}^{p}\rightarrow X$ be a holomorphic map into a weakly $p$-K\"ahler exact manifold which is non-degenerate at some point. A natural question arises: is it similar to the case of weakly K\"ahler hyperbolic manifolds, where the image of this map falls within the degenerate set $Z_{X}$? In \cite{MP} and  \cite{HM}, the authors proved for $p$-K\"ahler hyperbolic manifold that if certain growth conditions are added, then there does not exist such a holomorphic map. Their proof method seems not to be directly applicable to our case. If we use a stronger definition than weak $p$-K\"ahler hyperbolicity, such as semi-positive $p$-K\"ahler hyperbolicity, it seems that we can apply their methods to reach our conclusion. However, in general cases, we may need an approach similar to the one applied in \cite{BDET} using Ahlfors’ currents.

\section{Vector bundles over weakly K\"ahler hyperbolic manifolds}\label{sec:vb}

Let $(X,\omega)$ be a weakly K\"ahler hyperbolic manifolds. Let $(E,\bar{\partial}_{E})$ be a rank $r$ holomorphic vector bundle over $X$. For any $p$, since $\bar{\partial}_{E}^{2}=0$ then $(\Omega^{p,\bullet}_{X}(E),\bar{\partial}_{E})$ gives a chain complex. Let $H^{p,\bullet}(X,E)$ be the related cohomology group. Let $h^{p,q}=\dim_{\mathbb{C}}H^{p,q}(X,E)$. The $p$-th Euler characteristic $\chi^{p}(X,E)$ is defined by
\begin{equation*}
\chi^{p}(X,E)=\sum_{q=0}^{n}(-1)^{q}h^{p,q}(X,E).
\end{equation*}
For any Hermitian $H$ on $E$. Let $D_{H}$ be the Chern connection with respect to $H$ and $\bar{\partial}_{E}$, $F_{H}$ is its curvature.
Let 
\begin{equation*}
C_{E,H,p}=\sup\{\frac{\langle[\sqrt{-1}F_{\widetilde{H}},\Lambda_{\widetilde{\omega}}]\eta,\eta\rangle_{L^{2}}}{\langle\eta,\eta\rangle_{L^{2}}}|\forall\eta\in L^{2}\Omega^{p,0}(\widetilde{X},\widetilde{E})\},
\end{equation*}
where $\widetilde{\bullet}$ stands for the object in the universal cover. Let
\begin{equation*}
\begin{split}
C_{E,p}=\inf\{C_{E,H,p}|\ \text{$H$ is a Hermitian metric on $E$.}\}
\end{split}
\end{equation*}
be a nonnegative constant that depends only on $E$ and $(X,\omega)$.

First, Let's recall some basic concepts and theorem about $L^{2}$-Hodge theory.

\subsection{$L^{2}$-Hodge theory and $L^{2}$-index theorem}
Let $(X,\omega)$ be a complete K\"ahler manifold. $(E,\bar{\partial}_{E})$ is a holomorphic vector bundle over $X$. Fix a Hermitian metric $H$ on $E$, we can define the $\bar{\partial}_{E}$-Laplace by
\begin{equation*}
\Delta_{\bar{\partial}_{E}}=\bar{\partial}_{E}\bar{\partial}_{E}^{*}+\bar{\partial}_{E}^{*}\bar{\partial}_{E}:C_{0}^{\infty}(X,\Omega^{p,q}\otimes E)\rightarrow C_{0}^{\infty}(X,\Omega^{p,q}\otimes E).
\end{equation*}
where $\bar{\partial}_{E}^{*}$ is the adjoint operator of $\bar{\partial}_{E}$ with respect to the $L^{2}$-inner product. Let $L^{2}\Omega^{p,q}(X,E)$ be the space of $L^{2}$-integrable $(p,q)$ forms valued in $E$, which is of course the completion of $C_{0}^{\infty}(X,\Omega^{p,q}\otimes E)$. The $\bar{\partial}_{E}$-Laplace operator can be extended to a densely defined closed operator on $L^{2}\Omega^{p,q}(X,E)$. Let $\mathcal{D}^{p,q}\subset L^{2}\Omega^{p,q}(X,E)$ be the definition domain of $\Delta_{\bar{\partial}_{E}}$. A $(p,q)$-form $\alpha\in \mathcal{D}^{p,q}$ is called harmonic iff $\Delta_{\bar{\partial}_{E}}\alpha=0$. Let $\mathcal{H}^{p,q}(X,E)$ be the space of $L^{2}$-harmonic $(p,q)$-forms. Using the method of cutoff functions, it can be shown that $\alpha$ harmonic is equivalent to $\bar{\partial}_{E}\alpha =\bar{\partial}_{E}^{*}\alpha=0$ and it admits a Hodge decomposition
\begin{equation*}
L^{2}\Omega^{p,q}(X,E)=\mathcal{H}^{p,q}(X,E)\oplus\overline{Im(\bar{\partial}_{E})}\oplus\overline{Im(\bar{\partial}_{E}^{*})}
\end{equation*}

Let us now recall some definitions and properties of $L^2$-Hodge numbers and $L^2$ Euler characteristic numbers.

Let $\Gamma$ be a discrete faithful group that acts isometrically on $(X,E)$ such that the quotient space $X/\Gamma$ is a compact K\"ahler manifold and $E$ is pull back of a holomorphic vector bundle $V$ over $X/\Gamma$. This action induces a unitary action on $L^{2}\Omega^{p,q}(X,E)$. Moreover, $\mathcal{H}^{p,q}(X,E)$ is a $\Gamma$-invariant subspace. Such a Hilbert space together with a unitary $\Gamma$-action is called a $\Gamma$-module. To each $\Gamma$-module, one assigns the Von Neumann dimension, also called $\Gamma$-dimension, satisfy
\begin{equation*}
0 <\dim_{\Gamma}\mathcal{H}^{p,q}(X,E)\leq\infty,
\end{equation*}
which is a nonnegative real number or $+\infty$. For the precise definition, refer to \cite{BDET,H}, we don't list it here. We mainly use the following properties of $\dim_{\Gamma}$: 
\begin{itemize}
\item[(i)] $\dim_{\Gamma}\mathcal{H}=0\iff \mathcal{H}=\{0\}$.
\item[(ii)] If $\Gamma$ is a finite group, then $\dim_{\Gamma}\mathcal{H}=\dim_{\Gamma}\mathcal{H}/|\Gamma|$.
\item[(iii)] $\dim_{\Gamma}$ is additive. Given a exact sequence 
\begin{equation*}
0\rightarrow\mathcal{H}_{1}\rightarrow\mathcal{H}_{2}\rightarrow\mathcal{H}_{3}\rightarrow 0,
\end{equation*}
one has $\dim_{\Gamma}\mathcal{H}_{2}=\dim_{\Gamma}\mathcal{H}_{1}+\dim_{\Gamma}\mathcal{H}_{3}$.
\end{itemize}
Then the $L^2$-Betti numbers of $(X,E,\Gamma)$ are defined by
\begin{equation*}
h^{p,q}_{\Gamma}(X,E)=\dim_{\Gamma}\mathcal{H}(X,E)<+\infty, \forall 0\leq p,q\leq n
\end{equation*}
The $L^2$-Euler characteristic of $(X,E,\Gamma)$ is
\begin{equation*}
\chi_{\Gamma}^{p}(X,E)=\sum_{q=0}^{n}(-1)^{q}h_{\Gamma}^{p,q}(X,E)
\end{equation*}
By Atiyah's $L^2$-index theorem (see \cite{A}), we know
\begin{equation*}
\chi_{\Gamma}^{p}(X,E)=\chi^{p}(X/\Gamma,V).
\end{equation*}

\subsection{Modification of $X$ and $n$-th Euler characteristic of $E$}
Let $(X,\omega)$ be a weakly K\"ahler hyperbolic manifold of complex dimension $\dim X = n$. Suppose $[\mu]\in H^{1,1}(X,\mathbb{R})$ is nef and big cohomology class which is $\widetilde{d}(\text{bounded})$. We select a K\"ahler current $T\in[\mu]$, which exists since $[\mu]$ is big. By \cite[Proposition 2.3]{Bou}, or \cite[Theorem 3.4 and Lemma 3.5]{DP} there exists a modification 
\begin{equation}\label{eq:md}
\nu:X^{'}\rightarrow X,
\end{equation}
an effective $\mathbb{R}$-divisor $D$, and a K\"ahler form $\omega^{'}$ on $X^{'}$ such that the pull-back $\nu^{*}T$ is cohomologous to $\omega+[D]$, where $[D]$ denotes the current of integration along $D$. Moreover, one has that $D=\nu^{-1}(\nu(D))$, and $\nu|_{X^{'}\setminus D}$ is a biholomorphism onto $X\setminus \nu(D)$.

Let $\mu^{'}=\nu^{*}\mu$. For $t\geq 0$, let
\begin{equation*}
c(t)=\frac{\int_{X^{'}}(\mu^{'}+t\omega^{'})^{n}}{\int_{X^{'}}(\omega^{'})^{n}}.
\end{equation*}
For $t\in[0,1]$, it is bounded with respect to $t$. Now, consider the Monge–Amp\'ere equation
\begin{equation*}
(\mu^{'}+t\omega^{'}+\sqrt{-1}\partial\bar{\partial}\phi_{t})^{n}=c(t)(\omega^{'})^{n}
\end{equation*}
where $(\bullet)^{n}$ denotes the non-pluripolar $n$-th power. Since $[\mu]$ is nef,
\begin{equation*}
\int_{X^{'}}c(t)(\omega^{'})^{n}=\int_{X^{'}}(\mu^{'}+t\omega^{'})^{n}
\end{equation*}
is the volume of the class $[\mu^{'}+t\omega^{'}]$. For $t>0$, it has a unique smooth solution satisfies $\sup_{X^{'}}\phi_{t}=0$ by Yau's theorem. For $t=0$, it has a unique $\mu$-psh solution such that $\sup_{X^{'}}\phi_{0}=0$ by \cite{BEGZ}. In \cite{BDET}, the authors showed that $\phi_{0}$ is smooth on $X^{'}\setminus D$ and $\phi_{t}\rightarrow \phi_{0}$ in the sense of $C^{k,\beta}_{loc}(X^{'}\setminus D)$ for all $k\geq 0, 0<\beta<1$. And $\phi_{t}\rightarrow\phi_{0}$ in $L^{1}(X^{'})$. In particular, $\mu^{'}+\sqrt{-1}\partial\bar{\partial}\phi_{0}$ is a smooth K\"ahler metric on $X^{'}\setminus D$.

Now, we can state our last Theorem.
\begin{thm}\label{thm:2}
Let $(X,\omega)$ be a weakly K\"ahler hyperbolic manifolds. For any modifications $(X^{'},\omega^{'})$ obtained in above way, there is a non-negative constant $\kappa^{'}$. Let $(E,\bar{\partial}_{E})$ be a holomorphic vector bundle over $X$. If there exists a modification $(X^{'},\omega^{'})$ such that $\kappa^{'}\cdot C_{E^{'},p}<1$ for $0\leq p\leq n$, then
\begin{equation*}
\chi^{n}(X,E)>0.
\end{equation*}  
\end{thm}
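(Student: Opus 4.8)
The plan is to transport the problem to the modification $\nu:X'\to X$, where the big and nef class $[\mu]$ is represented by a genuine Kähler metric that is $\widetilde{d}(\text{bounded})$ on the universal cover, and then to run a twisted Gromov-type $L^{2}$-vanishing argument. First I would show that $\chi^{n}$ is insensitive to the modification. Writing $E'=\nu^{*}E$ and recalling $H^{n,q}(X,E)=H^{q}(X,K_{X}\otimes E)$, the relation $K_{X'}=\nu^{*}K_{X}\otimes\mathcal{O}_{X'}(R)$ with $R$ effective and $\nu$-exceptional, together with the projection formula and the Grauert--Riemenschneider vanishing theorem, gives $\nu_{*}(K_{X'}\otimes E')=K_{X}\otimes E$ and $R^{i}\nu_{*}(K_{X'}\otimes E')=0$ for $i>0$. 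The Leray spectral sequence then yields $H^{q}(X',K_{X'}\otimes E')\cong H^{q}(X,K_{X}\otimes E)$ for every $q$, hence $\chi^{n}(X',E')=\chi^{n}(X,E)$. This reduces the theorem to proving $\chi^{n}(X',E')>0$.

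On $X'$ I would use the Kähler metric $\omega_{0}=\mu'+\sqrt{-1}\partial\bar{\partial}\phi_{0}$, smooth on $X'\setminus D$, whose pullback $\widetilde{\omega}$ to the universal cover $\pi:\widetilde{X'}\to X'$ is $\widetilde{d}(\text{bounded})$; the constant $\kappa'$ is precisely the resulting bound on the bounded primitive. Passing to $\widetilde{X'}$ with deck group $\Gamma$, the $L^{2}$-Hodge decomposition and Atiyah's $L^{2}$-index theorem give
\begin{equation*}
\chi^{n}(X',E')=\sum_{q=0}^{n}(-1)^{q}\dim_{\Gamma}\mathcal{H}^{n,q}(\widetilde{X'},\widetilde{E'}).
\end{equation*}
One caveat is that $\omega_{0}$ is incomplete along $D$; as in \cite{BDET} this is handled by working with the complete approximating metrics $\mu'+t\omega'+\sqrt{-1}\partial\bar{\partial}\phi_{t}$ for $t>0$ and letting $t\to0$, while tracking that the relevant constants stay uniform.

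The heart of the argument, and the step I expect to be the main obstacle, is the vanishing $\mathcal{H}^{n,q}(\widetilde{X'},\widetilde{E'})=0$ for $q\geq1$. For a harmonic $(n,q)$-form $\eta$ the Nakano identity reads
\begin{equation*}
\Delta_{\bar{\partial}_{E}}=\Delta_{\partial_{E}}+[\sqrt{-1}F_{\widetilde{H}},\Lambda_{\widetilde{\omega}}],
\end{equation*}
and by itself it does not force vanishing, since the curvature term has no sign. The extra input is Gromov's bounded-primitive trick: using the Lefschetz decomposition and the bounded primitive of $\widetilde{\omega}$, a harmonic form of total degree $>n$ is controlled by its own $\bar{\partial}_{E}$ and $\bar{\partial}_{E}^{*}$ with a spectral gap of size $1/\kappa'$, while the twist by $E'$ perturbs this estimate by at most $C_{E',p}$ through $[\sqrt{-1}F_{\widetilde{H}},\Lambda_{\widetilde{\omega}}]$ (here the quantity $C_{E',p}$ on $(p,0)$-forms enters for each bidegree occurring in the decomposition, which is why the hypothesis is imposed for all $0\leq p\leq n$). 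The assumption $\kappa'\cdot C_{E',p}<1$ then makes the total operator strictly positive on forms of total degree $>n$, forcing $\eta=0$. Making this estimate uniform and rigorous on the incomplete covering geometry near $\widetilde{D}$, and correctly relaying the $(n,q)$ curvature term to the $(p,0)$ constant $C_{E',p}$ via the Hodge star, is the delicate part.

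Finally, the vanishing leaves $\chi^{n}(X',E')=\dim_{\Gamma}\mathcal{H}^{n,0}(\widetilde{X'},\widetilde{E'})\geq0$, and it remains to prove strict positivity, i.e. that there is a nonzero $L^{2}$ holomorphic $\widetilde{E'}$-valued $n$-form on $\widetilde{X'}$. This I would deduce from the bigness of $[\mu']$: Hörmander-type $L^{2}$-estimates for $\bar{\partial}$ against the strictly positive weight provided by $\omega_{0}$ on $\widetilde{X'}\setminus\widetilde{D}$ produce such a section, whence $\dim_{\Gamma}\mathcal{H}^{n,0}>0$ by property (i) of the $\Gamma$-dimension, and therefore $\chi^{n}(X,E)=\chi^{n}(X',E')>0$.
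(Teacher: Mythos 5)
Your skeleton (pass to the modification, apply Atiyah's $L^2$-index theorem, kill $\mathcal{H}^{n,q}$ for $q\geq 1$, show $\mathcal{H}^{n,0}\neq 0$) matches the paper, and your reduction $\chi^{n}(X',E')=\chi^{n}(X,E)$ via Grauert--Riemenschneider, the projection formula and Leray is a legitimate, arguably cleaner, substitute for the paper's appeal to Koll\'ar's birational invariance of $L^2$-Hodge numbers in bidegree $(n,q)$. But the central vanishing step has a genuine gap. You propose to work with $\omega_{0}=\mu'+\sqrt{-1}\partial\bar{\partial}\phi_{0}$ and to read $\kappa'$ as ``the bound on the bounded primitive'' of $\widetilde{\omega}_0$, obtaining a Gromov-type gap $1/\kappa'$. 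Neither half of this is available: $\omega_{0}$ is a K\"ahler metric only on $X'\setminus D$, so its pullback is an incomplete metric on the cover which does not descend from a compact quotient, and the $\Gamma$-dimension theory and Atiyah's theorem you invoke in the displayed identity require the honest metric $\omega'$ of the compact manifold $X'$; moreover $\omega_{0}$ is not known to be $\widetilde{d}(\text{bounded})$, since its candidate primitive is $\alpha+\sqrt{-1}\widetilde{\partial}\widetilde{\phi}_{0}$ and $\widetilde{\partial}\widetilde{\phi}_{0}$ can blow up along $D$ ($\phi_0$ is only smooth off $D$). In the paper $\kappa'$ is not a primitive bound at all but $\inf_{t}H^{2}(t)$ coming from an integral estimate. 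The paper's actual mechanism, following \cite{BDET}, is: keep all $L^2$ spaces relative to $\widetilde{\omega'}$; use the smooth primitives $\alpha_{t}=\alpha+\sqrt{-1}\widetilde{\partial}\widetilde{\phi}_{t}$ (bounded for each $t>0$ because $\phi_t$ is smooth on the compact $X'$); prove the gap on $(p,0)$-forms, $0\leq p\leq n-1$, where the pointwise positivity $(\sqrt{-1})^{p^{2}}\tr(\eta\wedge\eta^{*})\wedge\gamma^{n-p}\geq 0$ for $\gamma>0$ is available --- this positivity fails in bidegree $(n,q)$, which is exactly why your direct Lefschetz/Nakano attack on harmonic $(n,q)$-forms, transplanted from Gromov's honestly K\"ahler hyperbolic setting, does not go through when $\mu$ degenerates; and obtain uniformity in $t$ from the spectral-resolution mass estimate (Proposition \ref{p:BDET}): for $\eta\in \operatorname{Im}\mathcal{E}(\lambda_{0})$ at most half the $L^2$ mass sits over a small neighborhood $U_{\epsilon}\supset D$, while $\mu_{t}+t\omega\geq C_{1}(\epsilon)\,\omega$ off $U_{\epsilon}$ uniformly in $t$. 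Your phrase ``tracking that the relevant constants stay uniform'' names precisely the missing device; without the spectral cutoff, the constants degenerate as $t\to 0$. The passage to $(n,n-p)$ is then by Hodge star from the gap for $\Delta_{\partial_{\widetilde{H}}}$ on $(p,0)$-forms, not by relaying curvature terms inside a Lefschetz decomposition.

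The nonvanishing step is also a gap as written. H\"ormander estimates on $\widetilde{X'}$ require a weight $\varphi$ with $\sqrt{-1}\partial\bar{\partial}\varphi$ strictly positive and with enough control that the weighted and unweighted $L^2$ conditions are comparable; bigness of $[\mu']$ only gives $\widetilde{\mu'}=d\alpha$ with $\alpha$ bounded, and extracting a potential with $\sqrt{-1}\partial\bar{\partial}\varphi=\widetilde{\mu'}$ already amounts to solving $\bar{\partial}$ with estimates on the cover --- circular --- while $E'$ carries no positivity, only the smallness bound on $C_{E',p}$. This is precisely why Gromov, \cite{BDET}, and the paper obtain $\dim_{\Gamma}\mathcal{H}^{n,0}(\widetilde{X},\widetilde{E})>0$ through the Vafa--Witten twisting trick combined with the $L^2$-index theorem rather than through a direct $\bar{\partial}$-construction; you would need to either carry out that trick or supply an actual construction of the weight, and the latter is not known to exist in this setting.
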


The original proof idea is due to Gromov \cite{G},  Bei-Diverio-Eyssidieux-Trapani \cite{BDET} modified to apply to the weakly K\"ahler hyperbolic case. We also adopt the method of Bei-Diverio-Eyssidieux-Trapani, and in the following sections, we present the key steps of the proof.

\subsection{Vanishing of $L^{2}$-harmonic forms}

The following proposition plays a crucial role in the proof of the spectral gap theorem.

\begin{prop}\cite{BDET} \label{p:BDET}
Let $(M, \omega)$ be a compact K\"ahler manifold, $(\widetilde{M},\widetilde{\omega})\rightarrow(M,\omega)$ is a K\"ahler universal cover map. Let $(E,H)$ be a Hermitian holomorphic vector bundle over $M$, $(\tilde{E},\tilde{H})$ be the pull back of $(E,H)$. Given any $p,q\in \{0, ..., n\}$, let
\begin{equation*}
\Delta_{\bar{\partial}_{\widetilde{E}}}: L^{2}\Omega^{p,q}(\widetilde{M}, \widetilde{E}) \rightarrow L^{2}\Omega^{p,q}(\widetilde{M}, \widetilde{E})
\end{equation*}
be the closure of $\Delta_{\bar{\partial}_{\widetilde{E}}}: \Omega^{p,q}_{c}(\widetilde{M}, \widetilde{E}) \rightarrow \Omega_{c}^{p,q}(\widetilde{M}, \widetilde{E})$ and let $\{\mathcal{E}(\lambda)\}_{\lambda}$ be its spectral
resolution. For any fixed $\lambda_{0}>0$, there exists $\epsilon_{0}(\lambda_{0})>0$ such that if $0<
\epsilon < \epsilon_{0}(\lambda_{0})$, and $U_{\epsilon}$ is an open set in $M$ with
$\Vol_{\omega}(U_{\epsilon}) < \epsilon$, we have:
\begin{equation*}
\int_{\widetilde{U}_{\epsilon}}|\widetilde{\eta}|_{\widetilde{\omega}}^{2}dv_{\widetilde{\omega}}\leq\int_{\widetilde{M}\setminus\widetilde{U}_{\epsilon}}|\widetilde{\eta}|_{\widetilde{\omega}}^{2}dv_{\widetilde{\omega}}
\end{equation*}
for all $\widetilde{\eta}\in im(\mathcal{E}(\lambda_{0}))$, where $\widetilde{U}_{\epsilon}$ is the preimage of $U_{\epsilon}$ through $\pi:\widetilde{M}\rightarrow M.$
\end{prop}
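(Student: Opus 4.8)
The plan is to reduce the asserted inequality to a single scale-invariant concentration bound and then read off the threshold $\epsilon_0(\lambda_0)$. Writing $\|\cdot\|$ for the $L^2$-norm on $\widetilde M$, the inequality $\int_{\widetilde U_\epsilon}|\widetilde\eta|_{\widetilde\omega}^2 \le \int_{\widetilde M\setminus\widetilde U_\epsilon}|\widetilde\eta|_{\widetilde\omega}^2$ is equivalent to $\int_{\widetilde U_\epsilon}|\widetilde\eta|_{\widetilde\omega}^2\le\tfrac12\|\widetilde\eta\|^2$. Hence it suffices to produce a constant $C_1=C_1(\lambda_0)$, independent of $\widetilde\eta$ and of the open set, with
\[
\int_{\widetilde U_\epsilon}|\widetilde\eta|_{\widetilde\omega}^{2}\,dv_{\widetilde\omega}\le C_1\,\epsilon\,\|\widetilde\eta\|^2 \qquad\text{for all }\widetilde\eta\in \mathrm{im}(\mathcal E(\lambda_0)),
\]
after which one sets $\epsilon_0(\lambda_0)=1/(2C_1)$. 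I stress at the outset that a naive pointwise estimate $|\widetilde\eta(x)|^2\le C\|\widetilde\eta\|^2$, which is available for low-energy forms by elliptic regularity, is useless here because $\Vol_{\widetilde\omega}(\widetilde U_\epsilon)=+\infty$. The real content is the interplay of two facts: a form of bounded spectral energy is smooth at a fixed scale, while $\widetilde U_\epsilon$ meets every fundamental domain of $\pi$ in $\omega$-volume at most $\Vol_\omega(U_\epsilon)<\epsilon$.

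To exploit both facts simultaneously I would route the estimate through the heat semigroup of $\Delta:=\Delta_{\bp_{\widetilde E}}$. Since $\widetilde\eta$ has spectral support in $[0,\lambda_0]$, the operator $e^{\Delta}$ is bounded on $\mathrm{im}(\mathcal E(\lambda_0))$ with norm at most $e^{\lambda_0}$, so I may write $\widetilde\eta=e^{-\Delta}\zeta$ with $\zeta:=e^{\Delta}\widetilde\eta$ and $\|\zeta\|\le e^{\lambda_0}\|\widetilde\eta\|$. Let $p_1(x,y)$ be the (endomorphism-valued) Schwartz kernel of $e^{-\Delta}$ at time one. Because $(\widetilde M,\widetilde\omega)$ and $(\widetilde E,\widetilde H)$ are pulled back from the compact manifold $M$, the cover has bounded geometry and $\Delta$ is a generalized Laplacian with uniformly bounded zeroth-order (Bochner--Kodaira) term; consequently $p_1$ satisfies a uniform Gaussian upper bound $|p_1(x,y)|\le C\,e^{-d(x,y)^2/C}$ together with the mass bound $\int_{\widetilde M}|p_1(x,y)|\,dv_{\widetilde\omega}\le C$. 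From $\widetilde\eta(x)=\int p_1(x,y)\zeta(y)\,dy$ and Cauchy--Schwarz against the weight $|p_1(x,y)|$ one gets $|\widetilde\eta(x)|^2\le C\int|p_1(x,y)|\,|\zeta(y)|^2\,dy$, and integrating over $x\in\widetilde U_\epsilon$ and applying Fubini reduces everything to the single quantity $\int_{\widetilde U_\epsilon}|p_1(x,y)|\,dx$.

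The heart of the matter is then the uniform bound $\int_{\widetilde U_\epsilon}|p_1(x,y)|\,dx\le C_0\,\epsilon$, and this is where $\Gamma$-invariance of $\widetilde U_\epsilon=\pi^{-1}(U_\epsilon)$ enters, with $\Gamma=\pi_1(M)$ the deck group. Tiling $\widetilde M=\bigsqcup_{\gamma\in\Gamma}\gamma\mathcal F$ by translates of a fundamental domain $\mathcal F$, each translate meets $\widetilde U_\epsilon$ in volume exactly $\Vol_\omega(U_\epsilon)<\epsilon$, while on $\gamma\mathcal F$ the Gaussian bound gives $|p_1(x,y)|\le C\,e^{-d(y,\gamma\mathcal F)^2/C}$. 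Summing over $\gamma$ yields $\int_{\widetilde U_\epsilon}|p_1(x,y)|\,dx\le C\,\epsilon\sum_{\gamma}e^{-d(y,\gamma\mathcal F)^2/C}$, and this $\Gamma$-periodic Gaussian sum is bounded uniformly in $y$ because Gaussian decay dominates the at-worst-exponential volume growth of $\widetilde M$. Feeding this back gives $\int_{\widetilde U_\epsilon}|\widetilde\eta|_{\widetilde\omega}^2\le C\,C_0\,\epsilon\,\|\zeta\|^2\le C_1(\lambda_0)\,\epsilon\,\|\widetilde\eta\|^2$ with $C_1(\lambda_0)=C\,C_0\,e^{2\lambda_0}$, which is precisely the displayed bound; choosing $\epsilon_0(\lambda_0)=1/(2C_1)$ closes the argument.

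The one genuinely technical point is the analytic input used above: the uniform Gaussian heat-kernel bound for the bundle Laplacian on the noncompact cover, and the uniform convergence of the periodic Gaussian sum. Both rest on bounded geometry and $\Gamma$-cocompactness and are standard, but they are where all the care must go. If one prefers to avoid Gaussian kernel estimates, the localization can instead be obtained from finite propagation speed of the wave equation: choosing an even function $g$ with $g\equiv 1$ on $[0,\lambda_0]$ and $\widehat g$ supported in $[-R,R]$, the operator $g(\Delta)$ has Schwartz kernel supported within distance $R$ of the diagonal, so $\widetilde\eta=g(\Delta)\widetilde\eta$ is reproduced by a kernel of near-diagonal support; the per-fundamental-domain volume bound $\Vol_\omega(U_\epsilon)<\epsilon$ then produces the same estimate even more directly, at the cost of an approximation argument passing from the smooth cutoff $g$ to the spectral projection $\mathcal E(\lambda_0)$.
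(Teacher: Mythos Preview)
The paper does not give its own proof of this proposition: it is quoted verbatim from \cite{BDET} and used as a black box in the proof of Theorem~\ref{thm:gap}. So there is no ``paper's proof'' to compare against directly; what one can compare against is the argument in \cite{BDET}.

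Your heat-kernel argument is correct and is essentially the standard route to such concentration estimates on cocompact covers. The three ingredients you isolate --- (i) writing a low-energy form as $e^{-\Delta}\zeta$ with $\|\zeta\|\le e^{\lambda_0}\|\widetilde\eta\|$, (ii) the Gaussian upper bound $|p_1(x,y)|\le C e^{-d(x,y)^2/C}$ for the bundle heat kernel, valid because $\Delta_{\bar\partial_{\widetilde E}}$ differs from a connection Laplacian by a uniformly bounded zeroth-order term on a manifold of bounded geometry, and (iii) the tiling $\widetilde M=\bigsqcup_\gamma\gamma\mathcal F$ together with $\Vol(\widetilde U_\epsilon\cap\gamma\mathcal F)=\Vol_\omega(U_\epsilon)<\epsilon$ --- combine exactly as you say, and the Gaussian-versus-exponential comparison makes the periodic sum $\sum_\gamma e^{-d(y,\gamma\mathcal F)^2/C}$ uniformly bounded. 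This is in the same spirit as the argument in \cite{BDET}, which likewise exploits smooth $\Gamma$-invariant Schwartz kernels of functions of $\Delta$ together with the $\Gamma$-cocompactness to trade the small volume of $U_\epsilon$ in $M$ for a small constant in front of $\|\widetilde\eta\|^2$; your version routes this through $e^{-t\Delta}$ rather than through the spectral projection kernel directly, which is a matter of taste.

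One small caveat on your alternative finite-propagation sketch: an even $g$ with $g\equiv 1$ on $[0,\lambda_0]$ and $\widehat g$ compactly supported cannot exist (Paley--Wiener forces $g$ to be entire, hence identically $1$). You already flag that an approximation is needed; concretely one takes $g$ with $\widehat g$ compactly supported and $g\ge\tfrac12$ on $[0,\sqrt{\lambda_0}]$, applies the argument to $g(\sqrt\Delta)$, and inverts $g(\sqrt\Delta)$ on $\mathrm{im}(\mathcal E(\lambda_0))$ at the end. With that adjustment the wave-equation route is equally valid and arguably cleaner, since the kernel has genuinely compact support and only finitely many $\gamma$ contribute at each $y$.
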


 Now, we can prove the following spectral gap theorem.
\begin{thm}\label{thm:gap}
Let $\nu:(X^{'},\omega^{'})\rightarrow (X,\omega)$ be a modification related to a $\widetilde{d}(\text{bounded})$ big and nef class. There is non-negtive constant $\kappa^{'}$ corresponding to $(X^{'},\omega^{'})$.

Let $E^{'}=\nu^{*}E$ be the pull back holomorphic vector bundle over $X^{'}$. Assume $H^{'}$ be a Hermitian metric on $E^{'}$ such that $\kappa^{'}C_{E^{'},H^{'},p}<\frac{1}{3}$. Let $\pi: (\widetilde{X^{'}},\widetilde{\omega^{'}})\rightarrow(X^{'}, \omega^{'})$ be the universal covering of $X^{'}$ endowed with the pull-back metric $\widetilde{\omega^{'}}:= \pi^{*}\omega^{'}$.  Let $(\widetilde{E^{'}},\widetilde{H^{'}})$ be the pull back Hermitian holomorphic vector bundle over $\widetilde{X^{'}}$. Then for $0\leq p \leq n-1$ there exists a constant $C$ such that for any $\eta\in \mathcal{D}^{p,0}(\widetilde{X^{'}},\widetilde{E^{'}})$, we have
\begin{equation*}
\begin{split}
&\langle\eta,\Delta_{\partial_{\widetilde{H}^{'}}}\eta\rangle_{L^{2}}\geq C\langle\eta,\eta\rangle_{L^{2}}.\\
&\langle\eta,\Delta_{\bar{\partial}_{\widetilde{E}^{'}}}\eta\rangle_{L^{2}}\geq C\langle\eta,\eta\rangle_{L^{2}}
\end{split}
\end{equation*}
where $\Delta_{\partial_{\widetilde{H}^{'}}}=\partial_{\widetilde{H}^{'}}^{*}\partial_{\widetilde{H}^{'}}+\partial_{\widetilde{H}^{'}}\partial_{\widetilde{H}^{'}}^{*}$.
\end{thm}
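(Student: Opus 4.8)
The plan is to follow the Gromov–Bei--Diverio--Eyssidieux--Trapani strategy and produce the spectral gap by exhibiting, on the universal cover, a bounded primitive of a suitable nonnegative $(n,n)$-current together with a Bochner--Kodaira type identity on $E'$-valued $(p,0)$-forms. First I would work on $X'\setminus D$, where $\mu' + \sqrt{-1}\partial\bar\partial\phi_0$ is a genuine K\"ahler metric; call it $\omega_0$. By the $\widetilde d(\text{bounded})$ hypothesis, $\pi^*\mu = \widetilde d\beta$ with $\|\beta\|_{L^\infty}<\infty$, and one transfers this to the modification via $\nu$: $\pi^*\mu' = \widetilde d\beta'$ with $\beta'$ bounded with respect to $\widetilde{\omega'}$. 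The constant $\kappa'$ in the statement is, concretely, the supremum over $X'\setminus D$ of the pointwise norm relating $\beta'\wedge(\omega')^{\,\bullet}$-type primitives to the metric $\widetilde{\omega'}$ — i.e.\ the best constant controlling $\|\widetilde\beta'\|_\infty$ and the comparison between $\omega'$ and the degenerate $\omega_0$; this is the quantity that must be small against $C_{E',H',p}$.

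Second, I would set up the Bochner--Kodaira--Nakano identity for $\Delta_{\partial_{\widetilde H'}}$ and $\Delta_{\bar\partial_{\widetilde E'}}$ acting on $\eta\in\mathcal D^{p,0}(\widetilde{X'},\widetilde{E'})$. On $(p,0)$-forms the Nakano identity gives $\Delta_{\bar\partial_{\widetilde E'}} = \Delta_{\partial_{\widetilde H'}} + [\sqrt{-1}F_{\widetilde H'},\Lambda_{\widetilde{\omega'}}]$, so the curvature term is controlled in $L^2$-norm precisely by $C_{E',H',p}$ from the definition in the paper, and it suffices to prove the lower bound for $\Delta_{\partial_{\widetilde H'}}$; the $\bar\partial_{\widetilde E'}$ bound then follows provided the curvature term does not eat up more than the gap, which is exactly where the factor $\tfrac13$ (room for three comparable error terms) enters. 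For $\Delta_{\partial_{\widetilde H'}}$ I would integrate by parts: $\langle\eta,\Delta_{\partial_{\widetilde H'}}\eta\rangle = \|\partial_{\widetilde H'}\eta\|^2 + \|\partial_{\widetilde H'}^*\eta\|^2$, and estimate $\|\eta\|^2$ from below by using that $|\eta|^2 dv_{\widetilde{\omega'}}$, up to the bundle metric, is comparable to $c_p\,\eta\wedge\overline{*\eta}$, a closed-modulo-$\partial_{\widetilde H'}\eta$ quantity, against which one can apply Stokes with the bounded primitive $\widetilde\beta'\wedge(\pi^*\omega')^{\,n-p}$-type form, much as in the linear isoperimetric inequality proof earlier in the paper.

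Third — and this is the step I expect to be the main obstacle — one must control the boundary/degeneracy contribution from $D$: the metric $\omega_0$ degenerates along $D$, the primitive estimates are only uniform on $X'\setminus D$, and a priori $\eta$ need not decay near $\widetilde D$. The remedy is the cutoff-function argument combined with Proposition \ref{p:BDET}: choose $U_\epsilon$ a small neighbourhood of $\nu(D)$ with $\Vol_{\omega'}(U_\epsilon)<\epsilon$, use the spectral projection $\mathcal E(\lambda_0)$ to reduce to $\widetilde\eta\in\mathrm{im}(\mathcal E(\lambda_0))$, and invoke $\int_{\widetilde U_\epsilon}|\widetilde\eta|^2 \le \int_{\widetilde{X'}\setminus\widetilde U_\epsilon}|\widetilde\eta|^2$ to absorb the mass concentrated near $\widetilde D$; on the complement the bounded-primitive plus Stokes argument goes through, and letting $\epsilon\to 0$ (with $\lambda_0$ chosen appropriately) yields a uniform positive lower bound on $\mathrm{im}(\mathcal E(\lambda_0))$, whence $0$ is isolated in the spectrum and the claimed inequality holds with some $C>0$ for all $\eta\in\mathcal D^{p,0}$. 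Finally I would record that the three error terms — the curvature term bounded by $\kappa' C_{E',H',p}<\tfrac13$, the primitive-norm term, and the $D$-concentration term — are each $<\tfrac13$ of the total, so their sum is strictly less than the mass, giving the gap; this bookkeeping is routine once the geometric inputs above are in place.
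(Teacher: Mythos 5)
Your proposal follows essentially the same route as the paper: a bounded primitive of the pulled-back class combined with Stokes to bound $\|\eta\|_{L^{2}}^{2}$ by $\|\eta\|_{L^{2}}\langle\Delta\eta,\eta\rangle_{L^{2}}^{1/2}$, the Bochner--Kodaira--Nakano identity with the curvature term absorbed via $C_{E',H',p}$, and Proposition \ref{p:BDET} together with the spectral projection $\mathcal{E}(\lambda_{0})$ to neutralize the degeneracy along $D$. The only differences are cosmetic: the paper first bounds the full connection Laplacian $\Delta_{D_{\widetilde{H}}}$ and then splits via Nakano (rather than starting from $\Delta_{\partial_{\widetilde{H}'}}$), it defines $\kappa'$ as $\inf_{t}H^{2}(t)$ for an explicit constant $H(t)$ built from the primitive bounds and the non-degeneracy constant $C_{1}(\epsilon)$, and the factor $\tfrac{1}{3}$ enters not as ``three comparable error terms'' but because transferring the gap from $\Delta_{\bar{\partial}_{\widetilde{E}'}}$ to $\Delta_{\partial_{\widetilde{H}'}}$ costs one additional $C_{E',H',p}$.
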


\begin{proof} For the sake of simplicity let us replace $X$ with $X^{'}$, $[\mu]$ with $\mu^{'}$, $(E,H)$ with $(E^{'},H^{'})$ and $\omega$ with $\omega^{'}$. Let $\pi:\widetilde{X}\rightarrow X$ be the universal cover map. 

We consider the smooth forms $\mu_{t}:=\mu+\sqrt{-1}\partial\bar{\partial}\phi_{t}$ so that $\mu_{t}+t\omega$ is a K\"ahler form on $X$ for all $t > 0$. Let $\alpha$ be a smooth bounded primitive of $\widetilde{\mu}=\pi^{*}\mu$, so that
\begin{equation*}
\alpha_{t}=\alpha+\sqrt{-1}\widetilde{\partial}\widetilde{\phi}_{t}
\end{equation*}
is a smooth bounded primitive of $\widetilde{\mu}_{t}=\pi^{*}\mu$, where $\widetilde{\phi}_{t}=\phi_{t}\circ\pi$.
Now, let $\eta\in C_{0}^{\infty}(\widetilde{X},\Omega^{p,0}\otimes \widetilde{E})$, $\eta^{*\widetilde{H}}\in C^{\infty}_{0}(\widetilde{X},\Omega^{0,p}\otimes \widetilde{E}^{*})$ satisfies
\begin{equation*}
\eta^{*\widetilde{H}}(e)=\widetilde{H}(e,\eta),\quad\forall e\in \widetilde{E}.
\end{equation*}
So
\begin{equation*}
\begin{split}
D_{\widetilde{H}}\eta^{*\widetilde{H}}(e)=&d(\eta^{*\widetilde{H}}(e))-(-1)^{p}\eta^{*\widetilde{H}}(D_{\widetilde{H}}e)\\
=&\widetilde{H}(D_{\widetilde{H}}e,\eta)+\widetilde{H}(e,D_{\widetilde{H}}\eta)-\widetilde{H}(D_{\widetilde{H}}e,\eta)\\
=&\widetilde{H}(e,D_{\widetilde{H}}\eta),
\end{split}
\end{equation*}
and $|D_{\widetilde{H}}\eta|^{2}_{\widetilde{H},\widetilde{\omega}}=|D_{\widetilde{H}}\eta^{*\widetilde{H}}|_{\widetilde{H}^{*},\widetilde{\omega}}^{2}$. For the convenience of writing, we may omit upper and lower indices. Let $k$ be an integer with $0\leq k\leq n-p-1$. Then we have
\begin{equation*}
\begin{split}
&d(\tr(\eta\wedge\eta^{*})\wedge\widetilde{\alpha}_{t}\wedge\widetilde{\mu}^{n-p-k-1}\wedge\widetilde{\omega}^{k})\\
=&d\tr(\eta\wedge\eta^{*})\wedge\widetilde{\alpha}_{t}\wedge\widetilde{\mu}^{n-p-k-1}\wedge\widetilde{\omega}^{k}+\tr(\eta\wedge\eta^{*})\wedge\widetilde{\mu}^{n-p-k}\wedge\widetilde{\omega}^{k}\\
=&\tr(D_{\widetilde{H}}\eta\wedge\eta^{*})\wedge\widetilde{\alpha}_{t}\wedge\widetilde{\mu}^{n-p-k-1}\wedge\widetilde{\omega}^{k}\\
&+(-1)^{p}\tr(\eta\wedge D_{\widetilde{H}}\eta^{*})\wedge\widetilde{\alpha}_{t}\wedge\widetilde{\mu}^{n-p-k-1}\wedge\widetilde{\omega}^{k}
\\
&+\tr(\eta\wedge\eta^{*})\wedge\widetilde{\mu}^{n-p-k}\wedge\widetilde{\omega}^{k}
\end{split}
\end{equation*}
and
\begin{equation*}
\begin{split}
\Big|\int_{\widetilde{X}}(\sqrt{-1})^{p^{2}}\tr(\eta\wedge\eta^{*})\wedge\widetilde{\mu}^{n-p-k}\wedge\widetilde{\omega}^{k}\Big|\leq &C(n,p,k)\|\alpha_{t}\|_{L^{\infty}}\|\widetilde{\mu}^{n-p-k}\|_{L^{\infty}}\int_{\tilde{X}}|D_{\widetilde{H}}\eta||\eta|dv_{\widetilde{\omega}}\\
\leq&C(n,p,k)\|\alpha_{t}\|_{L^{\infty}}\|\widetilde{\mu}^{n-p-k}\|_{L^{\infty}}\|\eta\|_{L^{2}(\widetilde{X},\widetilde{\omega})}\langle \Delta_{D_{\widetilde{H}}}\eta,\eta\rangle_{L^{2}(\widetilde{X},\widetilde{\omega})}^{1/2}
\end{split}
\end{equation*}
where $\Delta_{D_{\widetilde{H}}}=D_{\widetilde{H}}^{*}D_{\widetilde{H}}+D_{\widetilde{H}}D_{\widetilde{H}}^{*}$ and $D_{\widetilde{H}}^{*}=\sqrt{-1}[\Lambda_{\widetilde{\omega}},\bar{\partial}_{\widetilde{E}}-\partial_{\widetilde{H}}]$. Let 
\begin{equation*}
C(t)=C(n,p,k)\|\alpha_{t}\|_{L^{\infty}}\|\widetilde{\mu}^{n-p-k}\|_{L^{\infty}}
\end{equation*}
Since $C^{\infty}_{0}(\widetilde{X},\Omega^{p,0}\otimes\widetilde{E})$ is dense in $L^{2}\Omega^{p,0}(\widetilde{X},\widetilde{E})$, we can conclude that for each $\eta\in\mathcal{D}^{p,0}$ it holds
\begin{equation}\label{eq:pl}
\begin{split}
\Big|\int_{\widetilde{X}}(\sqrt{-1})^{p^{2}}\tr(\eta\wedge\eta^{*})\wedge\widetilde{\mu}^{n-p-k}\wedge\widetilde{\omega}^{k}\Big|
\leq&C(t)\|\eta\|_{L^{2}(\widetilde{X},\widetilde{\omega})}\langle \Delta_{D_{\widetilde{H}}}\eta,\eta\rangle_{L^{2}(\widetilde{X},\widetilde{\omega})}^{1/2}
\end{split}
\end{equation}
Fix $\lambda_{0}=C_{E}+1$, $0<\epsilon<\epsilon_{0}(\lambda_{0})$. Let $U_{\epsilon}$ be a open neighborhood of divisor $D$ such that $\Vol_{\omega}(U_{\epsilon})<\epsilon$. Since $\phi_{t}$ converges to $\phi_{0}$ in $C^{2,\alpha}(X\setminus D)$, and $\mu+\sqrt{-1}\partial\bar{\partial}\phi_{0}>0$ on $X\setminus D$, there
exists a constant $C_{1}=C_{1}(\epsilon)>0$ independent of $t$ such that $\widetilde{\mu}_{t}+t\widetilde{\omega}>C_{1}\widetilde{\omega}$ on $\widetilde{X}\setminus\widetilde{U}_{\epsilon}$ where $\widetilde{U}_{\epsilon}=\pi^{-1}(U_{\epsilon})$. Then by Proposition \ref{p:BDET}, we have 
\begin{equation*}
\int_{\widetilde{X}}|\eta|_{\widetilde{\omega}}^{2}dv_{\widetilde{\omega}}\leq2\int_{\widetilde{X}\setminus\widetilde{U}_{\epsilon}}|\eta|_{\widetilde{\omega}}^{2}dv_{\widetilde{\omega}}
\end{equation*}
for any $\eta\in Im(\mathcal{E}(\lambda_{0}))$. Therefore, for any $\eta\in Im(\mathcal{E}(\lambda_{0}))$, we get
\begin{equation*}
\begin{split}
\int_{\widetilde{X}}|\eta|_{\widetilde{\omega}}^{2}dv_{\widetilde{\omega}}\leq&2\int_{\widetilde{X}\setminus\widetilde{U}_{\epsilon}}|\eta|_{\widetilde{\omega}}^{2}dv_{\widetilde{\omega}}\\
=&2C(n,p)\int_{\widetilde{X}\setminus\widetilde{U}_{\epsilon}}(\sqrt{-1})^{p^{2}}\tr(\eta\wedge\eta^{*})\wedge\widetilde{\omega}^{n-p}\\
\leq&2\frac{C(n,p)}{C_{1}^{n-p}}\int_{\widetilde{X}\setminus\widetilde{U}_{\epsilon}}(\sqrt{-1})^{p^{2}}\tr(\eta\wedge\eta^{*})\wedge(\widetilde{\mu}_{t}+t\widetilde{\omega})^{n-p}\\
\leq&2\frac{C(n,p)}{C_{1}^{n-p}}\int_{\widetilde{X}}(\sqrt{-1})^{p^{2}}\tr(\eta\wedge\eta^{*})\wedge(\widetilde{\mu}_{t}+t\widetilde{\omega})^{n-p}
\end{split}
\end{equation*}
Then by (\ref{eq:pl}), we have
\begin{equation*}
\begin{split}
&\frac{C(n,p)}{C_{1}^{n-p}}\int_{\widetilde{X}}(\sqrt{-1})^{p^{2}}\tr(\eta\wedge\eta^{*})\wedge(\widetilde{\mu}_{t}+t\widetilde{\omega})^{n-p}\\
\leq & \frac{C(n,p)}{C_{1}^{n-p}}\sum_{k=1}^{n-p}C(n,p,k)t^{k}|\int_{\widetilde{X}}(\sqrt{-1})^{p^{2}}\tr(\eta\wedge\eta^{*})\wedge\widetilde{\mu}_{t}^{n-p-k}\wedge\widetilde{\omega}^{k}|\\
\leq& \frac{C(n,p)}{C_{1}^{n-p}}\sum_{k=1}^{n-p-1}C(n,p,k)t^{k}C(t)\|\eta\|_{L^{2}}\langle\Delta_{D_{\widetilde{H}}}\eta,\eta\rangle_{L^{2}}^{1/2}\\
&+\frac{1}{C_{1}^{n-p}}t^{n-p}\int_{\widetilde{X}}|\eta|^{2}_{\widetilde{\omega}}dv_{\widetilde{\omega}}
\end{split}
\end{equation*}
That is
\begin{equation*}
\begin{split}
(1-\frac{2}{C_{1}^{n-p}}t^{n-p})\|\eta\|_{L^{2}}\leq 2\frac{C(n,p)}{C_{1}^{n-p}}\sum_{k=1}^{n-p-1}C(n,p,k)t^{k}C(t)\langle\Delta_{D_{\widetilde{H}}}\eta,\eta\rangle_{L^{2}}^{1/2}.
\end{split}
\end{equation*}
For $t\in(0,\frac{1}{2^{1/(n-p)}}C_{1})$, let
\begin{equation*}
H(t)=\frac{2\frac{C(n,p)}{C_{1}^{n-p}}\sum_{k=1}^{n-p-1}C(n,p,k)t^{k}C(t)}{1-\frac{2}{C_{1}^{n-p}}t^{n-p}}>0.
\end{equation*}
So
\begin{equation}\label{eq:main}
\|\eta\|_{L^{2}}^{2}\leq H^{2}(t)\langle\Delta_{D_{\widetilde{H}}}\eta,\eta\rangle_{L^{2}}.
\end{equation}
Since we have
\begin{equation*}
\Delta_{D_{\widetilde{H}}}=2\Delta_{\bar{\partial_{\widetilde{E}}}}-\sqrt{-1}[F_{\widetilde{H}},\Lambda_{\widetilde{\omega}}].
\end{equation*}
Then
\begin{equation*}
\begin{split}
\langle\Delta_{D_{\widetilde{H}}}\eta,\eta\rangle_{L^{2}}=&2\langle\Delta_{\bar{\partial}_{\widetilde{E}}}\eta,\eta\rangle_{L^{2}}-\langle\sqrt{-1}\Lambda_{\widetilde{\omega}}(F_{\widetilde{H}}\wedge\eta),\eta \rangle_{L^{2}}\\ 
\leq& 2\langle\Delta_{\bar{\partial}_{\widetilde{E}}}\eta,\eta\rangle_{L^{2}}+C_{E,H,p}\|\eta\|_{L^{2}}
\end{split}
\end{equation*}
Combine with (\ref{eq:main}), we get
\begin{equation*}
\begin{split}
(1-H^{2}(t)C_{E,H,p})\|\eta\|_{L^{2}}^{2}\leq 2H^{2}(t)\langle\Delta_{\bar{\partial}_{\widetilde{E}}}\eta,\eta\rangle_{L^{2}}
\end{split}
\end{equation*}
Let $\kappa^{'}=\inf\{H^{2}(t)|\ \forall t\in(0,\frac{1}{2^{1/(n-p)}}C_{1}]\}$. If $\kappa^{'}C_{E}<\frac{1}{3}$, then we can choose $t_{0}\in(0,\frac{1}{2^{1/(n-p)}}C_{1}]$ and Hermitian metric $H$ such that $\frac{1}{3}-H^{2}(t_{0})C_{E,H,p}>0$. Then we have
\begin{equation*}
\begin{split}
\frac{1-H^{2}(t_{0})C_{E,H,p}}{2H^{2}(t_{0})}\|\eta\|_{L^{2}}^{2}\leq \langle\Delta_{\bar{\partial}_{\widetilde{E}}}\eta,\eta\rangle_{L^{2}}
\end{split}
\end{equation*}
For any $\psi\in\mathcal{D}^{p,0}$. Let $\eta=\mathcal{E}(\lambda_{0})(\psi)$ and $v=\psi-\eta$. Then 
\begin{equation*}
\begin{split}
\langle\eta,v\rangle_{L^{2}}=\langle\Delta_{\bar{\partial}_{\widetilde{E}}}\eta,v\rangle_{L^{2}}=0.
\end{split}
\end{equation*}
and
\begin{equation*}
\|v\|_{L^{2}}^{2}\leq\lambda_{0} \langle\Delta_{\bar{\partial}_{\widetilde{E}}}v,v\rangle_{L^{2}}.
\end{equation*}
Therefore we have
\begin{equation*}
\begin{split}
\langle\Delta_{\bar{\partial}_{\widetilde{E}}}\psi,\psi\rangle_{L^{2}}=&\langle\Delta_{\bar{\partial}_{\widetilde{E}}}\eta,\eta\rangle_{L^{2}}+\langle\Delta_{\bar{\partial}_{\widetilde{E}}}v,v\rangle_{L^{2}}\\
\geq&\frac{1-H^{2}(t_{0})C_{E,H,p}}{2H^{2}(t_{0})}\|\eta\|_{L^{2}}^{2}+\lambda_{0}\|v\|_{L^{2}}^{2}\\
\geq& C\|\psi\|_{L^{2}}^{2}
\end{split}
\end{equation*}
where $C_{2}=\min\{1+C_{E},\frac{1-H^{2}(t_{0})C_{E,H,p}}{2H^{2}(t_{0})}\}$. Since $\frac{1}{3}-H^{2}(t_{0})C_{E,H,p}>0$, we have
\begin{equation*}
\begin{split}
\langle\Delta_{\partial_{\widetilde{H}}}\psi,\psi\rangle_{L^{2}}\geq& C\|\psi\|_{L^{2}}^{2}
\end{split}
\end{equation*}
where $C=C_{2}-C_{E,H,p}>0$.
\end{proof}

\begin{proof}[Outline of the proof of Theorem \ref{thm:2}]
Let $*:\Omega^{p,0}(\widetilde{X},\widetilde{E})\rightarrow \Omega^{n,n-p}(\widetilde{X},\widetilde{E})$ be the Hodge star operator. The following hold
\begin{equation*}
\begin{split}
*\circ\Delta_{\partial_{\widetilde{H}}}=&\Delta_{\bar{\partial}_{\widetilde{E}}}\circ*\\
\Delta_{\partial_{\widetilde{H}}}\circ*=&*\circ\Delta_{\bar{\partial}_{\widetilde{E}}}.
\end{split}
\end{equation*}
So $\eta\in L^{2}\Omega^{n,n-p}$ is $\Delta_{\bar{\partial}_{\widetilde{E}}}$-harmonic if and only if $*\eta$ is $\Delta_{\partial_{\widetilde{H}}}$-harmonic. Then by Theorem \ref{thm:gap}, we have $\mathcal{H}^{n,n-p}(\widetilde{X}^{'},\widetilde{E}^{'})=\{0\}$ for $0\leq p\leq n-1$. By birational invariance \cite[Corollary 11.4]{Ko} of $L^2$-Hodge numbers in bidegree $(n,q)$, we get $\mathcal{H}^{n,n-p}(\widetilde{X},\widetilde{E})=\{0\}$ for $0\leq p\leq n-1$.

To finish the proof, one also need to prove $\mathcal{H}^{n,0}(\widetilde{X},\widetilde{E})\neq\{0\}$. As shown in \cite{BDET}, this can be achieved through the Vafa-Witten trick. We omit this part here. 

After prove the non-vanishing theorem, we then get $\chi^{n}(X,E)>0$.

\end{proof}

\medskip

{\bf  Acknowledgement:} 
The author are partially supported by NSFC (Grant No. 12141104) and Natural Science Foundation of Jiangsu Province, China (Grant No. BK20241434).

\medskip

%-----------------------------------------------------------------------------
%-----------------------------------------------------------------------------

\end{document}